\numberwithin{equation}{section}
\newtheorem{theorem}[equation]{Theorem}
\newtheorem{lemma}[equation]{Lemma}
\newtheorem{proposition}[equation]{Proposition}
\newtheorem{corollary}[equation]{Corollary}
\theoremstyle{definition}
\newtheorem{definition}[equation]{Definition}
\theoremstyle{definition}
\newtheorem{remark}[equation]{Remark}
\def\kint_#1{\mathchoice%
          {\mathop{\kern 0.2em\vrule width 0.6em height 0.69678ex depth -0.58065ex
                  \kern -0.8em \intop}\nolimits_{\kern -0.4em#1}}%
          {\mathop{\kern 0.1em\vrule width 0.5em height 0.69678ex depth -0.60387ex
                  \kern -0.6em \intop}\nolimits_{#1}}%
          {\mathop{\kern 0.1em\vrule width 0.5em height 0.69678ex depth -0.60387ex
                  \kern -0.6em \intop}\nolimits_{#1}}%
          {\mathop{\kern 0.1em\vrule width 0.5em height 0.69678ex depth -0.60387ex
                  \kern -0.6em \intop}\nolimits_{#1}}}
\newcommand{\R}{\mathbb{R}}
\newcommand{\N}{\mathbb{N}}
\providecommand{\ch}[1]{\text{\raise 2pt \hbox{$\chi$}\kern-0.2pt}_{#1}}
\newcommand\diam{\operatorname{diam}}
\begin{document}
\title{Smoothness spaces of higher order \\
on lower dimensional subsets of \\
the Euclidean space}
\author{Lizaveta Ihnatsyeva AND Riikka Korte}
\subjclass[2010]{46E35}
\date{\today}
\begin{abstract}


We study Sobolev type spaces defined in terms of sharp maximal functions on Ahlfors regular subsets of $\R^{n}$ and the relation between these spaces and traces of classical Sobolev spaces.
This extends in a certain way the results of Shvartsman \cite{Shvartsman} to the case of lower dimensional subsets of the Euclidean space.
\end{abstract}
\maketitle

\section{Introduction}


A. Calder\'on proved in \cite{Calderon1972} that a function belongs to the Sobolev space on $\R^{n}$
if and only if the function and its sharp maximal function of the corresponding order are both in 
an $L^{p}$-space;
see also \cite{CalderonScott1978}.  This characterization does not use the notion of derivatives and therefore it can 
be used to at least formally define Sobolev spaces in more general settings.
Some recent results show that this approach is reasonable. In particular,
P. Shvartsman proved  in \cite{Shvartsman} that the trace of a Sobolev
space to an arbtirary Ahlfors $n$-regular subset of the
Euclidean space admits an intrinsic Calder\'on type characterization. 
Note that this kind of subsets may even have an empty interior. See also
\cite{HajlaszKoskelaTuominen}, where 
a description in terms of sharp maximal functions for Sobolev spaces on extension domains was given.




The main purpose of this paper is to extend Shvartsman's results to lower dimensional closed subsets of the Euclidean space. 
In \cite{Jonsson} A. Jonsson characterized the trace spaces of Sobolev spaces to Ahlfors regular sets as certain Besov type spaces. 
Therefore, our problem can be also formulated as comparison 
of these Besov spaces and Calder\'on type spaces.
Since traces of Sobolev spaces to lower dimensional subsets are of essentially different character than classical Sobolev spaces, 
an exact characterization of Calder\'on type  seems not to be possible on such subsets. 
However, in our main result, Theorem \ref{thm:besov}, we show that Calder\'on type spaces lie between certain Besov spaces. 
This result in particular implies that the trace space of a Sobolev space is embedded in the Calder\'on type space of the corresponding order and that it contains any Calder\'on space of higher order, see Corollary \ref{corollary}.
Our results have a similar spirit as Theorem 4 in \cite{HajlaszMartio}, where 
relations between the trace spaces of first order Sobolev spaces and Haj\l asz-Sobolev spaces were explored. In particular, their result is also of embedding type, not a sharp characterization, which is not surprising since the Haj\l asz-Sobolev space  coincides with a Calder\'on type space
in their context.





In this paper, we only consider Ahlfors regular sets whose codimension is less than one.
This lower bound for the dimension was due to 
the observation that usual properties of sharp maximal functions on $\mathbb{R}^n$, studied e.g. in \cite{DeVoreSharpley84},
remain valid on sets preserving Markov's
inequalities for polynomials,
and  by \cite{JonssonWallin1984} Ahlfors $s$-regular sets  with $n-1<s\le n$ have this property. 
This class of sets includes, for example, 
many interesting Cantor type sets and self-similar sets. 
 
There are very few approaches to spaces of higher order smoothness even on such kind of sets,
in spite of the fact that the first order smoothness spaces have been extensively studied in different situations. One of the goals of the paper is to show the advantage of Calder\'on's approach or, more precisely, its local polynomial approximation interpretation in \cite{Brudnyi}, \cite{DeVoreSharpley84} and \cite{Shvartsman}, in defining Sobolev type spaces in more general setting; see the related discussion in Section \ref{sect:Sobolev}.

\section{Preliminaries}\label{sect:prel}

Let $H^s$ denote the $s$-dimensional Hausdorff measure on $\mathbb{R}^n$ and let
$Q=Q(x,r)$ be a closed cube in $\mathbb{R}^n$ centered at $x$ with
side length $2r$ and sides parallel to the coordinate axes.

We say that a subset $S\subset\mathbb{R}^n$ is an $s$-set (or Ahlfors $s$-regular)
if there are
constants $c_1,\,c_2>0$ such that for every cube $Q=Q(x,r)$ with
center at $S$ and ${\rm diam}\;Q\le {\rm diam}\;S$, we have
\[
c_1r^s\le H^s(Q(x,r)\cap S)\le c_2r^s.
\]
In this paper, we will always assume that $S\subset \R^n$ is an $s$-set with $n-1< s\leq n$.

\subsection{Sobolev spaces}

Let $L^p(\R^n)$ be the Lebesgue space of $L^p$-integrable functions in $\R^n$. 
For a non-negative integer $k$ and $1\leq p\leq \infty$, 
the Sobolev space $W^{k,p}(\R^n)$ consists of all functions $f\in L^p(\R^n)$ 
having distributional derivatives $D^jf$, $|j|\leq k$, in $L^p(\R^n)$.

There are several approaches to the notion of smoothness spaces of fractional order. One of them is as follows.
\subsection{Potential spaces}
The Bessel kernel of order $\alpha>0$ is the function $G_\alpha\in L^1(\R^n)$ defined by
\[
\hat G (\xi)=(1+4\pi^2|\xi|^2)^{-\alpha/2}.
\]
The potential space $L^p_\alpha(\R^n)$, $\alpha\geq 0$, $1\leq p\leq \infty$, is 
\[
L^p_\alpha(\R^n)=\{f=G_\alpha \ast g\,:\, g\in L^p(\R^n)\}, \quad \alpha>0,
\]
and $L^p_0(\R^n)=L^p(\R^n)$. 

It was shown already by Calder\'on \cite{Calderon1961} that if $1<p<\infty$ and $\alpha$ is a non-negative integer, the Sobolev spaces and potential spaces coincide, i.e.
\begin{equation*}
L^p_k(\R^n)=W^{k,p}(\R^n), \,\,\,\, 1<p<\infty,\,\,\,k\in\N.
\end{equation*}

\subsection{Besov spaces}
Another scale of spaces which is widely used in the study of fractional order smoothness properties of functions is the family of Besov spaces.
For $\alpha>0$ and $1\le p,q\le\infty$, the Besov space $B^{p,q}_{\alpha}(\R^n)$ may be defined in the following way. 
Let $k$ be the integer such that $0\le k<\alpha\le k+1$. Then $B^{p,q}_{\alpha}(\R^n)$ consists of functions $f\in L^p(\R^n)$ such that
\begin{equation}\label{DefofBesovSpace}
\sum\limits_{|j|\le k}\Vert D^jf\Vert_p+\sum\limits_{|j|=k}\bigg(\int_{\R^n}\frac{\Vert D^jf(\cdot +h)-D^jf(\cdot)\Vert_p^q}{|h|^{n+(\alpha-k)q}}\,dh\bigg)^{1/q}<\infty, 
\end{equation}
if $k<\alpha<k+1$ and $1\le p,q<\infty$. If $q=\infty$, then  \eqref{DefofBesovSpace} shall be interpreted in the usual limiting way 
and if $\alpha=k+1$, the first difference of $D^jf$ in \eqref{DefofBesovSpace} shall be replaced by the second difference. For more details, see \cite{BesovIlinNikolski}.

There are several equivalent characterizations of Besov spaces $B^{p,q}_{\alpha}(\R^n)$, for a general theory of these spaces see, 
for example, monographs \cite{BesovIlinNikolski}, \cite{Triebel} and the references therein. 
We are interested in Besov spaces 
as spaces of traces of functions from Sobolev or, more general, potential spaces to subsets of $\R^n$. See the next paragraph for more 
precise formulations.

Jonsson and Wallin  \cite{JonssonWallin1978} extended the notion of a Besov space to more general setting. 
They introduced a definition of Besov spaces on general $s$-sets, $0<s\le n$, in $\R^n$.
The definition is rather technical, but when $n-1<s\leq n$,
it admits a more simple formulation, which is based on the local polynomial approximation approach, see Theorem 5 on p. 135 of \cite{JonssonWallin1984}. 
In this paper, we will use this formulation.


\begin{definition} Let $S$ be an $s$-set with $n-1<s\le n$. Let $1\le 
p,q\le\infty$ and $\alpha>0$. Then a function $f$ is in the Besov space 
$B^{p,q}_{\alpha}(S)$ 
if $f\in L^p(S)$ and there is a sequence $\{c_\nu\}_{\nu=0}^\infty$, $\sum_\nu 
c_\nu^q<\infty$, such that for every net $\pi$ with mesh size $2^{-\nu}$, 
$\nu=0,1,\dots$, there is a function $P_\pi f\in P_{[\alpha]}(\pi)$ 
satisfying \begin{equation}\label{BesovSpacesViaLocalApproximations} 
\bigg(\int_{S}|f-P_\pi f|^p\;dH^s\bigg)^{1/p}\le 
2^{-\nu\alpha}c_\nu. \end{equation}
\end{definition}

Here $P_k(\pi)$ denotes the set of all functions $g$ such that 
the trace of $g$ to a cube $Q\in\pi$ is a polynomial of degree at 
most $k$, and $[\alpha]$ is the largest integer that is not greater than $\alpha$.

Note that in \cite{JonssonWallin1984}, the definition is stated for $s$-sets preserving  
Markov's inequality.
However, by Theorem 3 on p. 39 of \cite{JonssonWallin1984}, 
all $s$-sets with $s>n-1$ satisfy this condition.

\subsection{Traces of Sobolev functions}

We say that $f$ can be pointwisely defined at $x$ if the limit
\[
\bar{f}(x)=\lim\limits_{r\to 0}  \kint_{Q(x,r)}f(y)\,dy=\lim\limits_{r\to 0} \frac{1}{|Q(x,r)|}\int_{Q(x,r)} f(y)\,dy
\]
exists. By Lebesgue's theorem $f=\bar{f}$ a.e. in $\mathbb{R}^n$.

At every $x\in S$ where $\bar{f}(x)$ exists, we define the trace of a function $f$ to $S$ by 
\[
f|_S(x):=\bar{f}(x).
\]


If $f\in L^p_\beta(\mathbb{R}^n)$, $1<p<\infty$, then the Hausdorff dimension of the set of points $x\in\R^n$ where $\bar{f}(x)$ does not exists is at most $n-\beta p$, see for example \cite{AdamsHedberg}. Thus the trace of a function 
$f\in L^p_\beta(\mathbb{R}^n)$ to an $s$-set, $s>n-\beta p$, is well defined
i.e. $f|_S$ is defined at $H^s$-a.e. point of $S$.

The next statement, which was proved by A. Jonsson in \cite{Jonsson}, gives a characterization of the trace of the potential space to an $s$-set.

\begin{theorem}\label{thm:TracesOfSobolevSpacesToSsets}
Let $S$ be an $s$-set, $0<s<n$, $1<p<\infty$ and 
$\alpha=\beta-\frac{n-s}{p}>0$. Then 
\[
L^p_\beta(\mathbb{R}^n)|_S=B^{p,p}_\alpha(S),
\]
where the equality 
means that the trace operator $\mathcal{R}:f\mapsto f|_S$ satisfies the 
inequality
\[
\Vert \mathcal{R}f\Vert_{B^{p,p}_\alpha(S)}\le c\Vert 
f\Vert_{L^p_\beta(\mathbb{R}^n)}
\]
for some constant $c$ and for all 
functions $f\in L^p_\beta(\mathbb{R}^n)$, and there exists an 
extension operator $\mathcal{E}:B^{p,p}_\alpha(S)\to 
L^p_\beta(\mathbb{R}^n)$ such that for some constant $c$, we have
\[ 
\Vert\mathcal{E}g\Vert_{L^p_\beta(\mathbb{R}^n)}\le c\Vert g\Vert_{B^{p,p}_\alpha(S)}
\]
for all functions $g\in 
B^{p,p}_\alpha(S)$.
\end{theorem}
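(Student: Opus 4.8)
The plan is to establish the two halves of the asserted equality separately: boundedness of the restriction $\mathcal{R}\colon L^p_\beta(\Rn)\to B^{p,p}_\alpha(S)$ and existence of a bounded extension $\mathcal{E}$ in the reverse direction. Throughout I would work with the representation $f=G_\beta\ast g$, $\|g\|_p\simeq\|f\|_{L^p_\beta}$, together with the local polynomial approximation description of $B^{p,p}_\alpha(S)$ from the Definition above, so that the task reduces to producing, for each dyadic net $\pi$ of mesh $2^{-\nu}$, a piecewise polynomial $P_\pi f\in P_{[\alpha]}(\pi)$ whose $L^p(S,\,dH^s)$-error satisfies \eqref{BesovSpacesViaLocalApproximations} with coefficients $c_\nu$ obeying $\sum_\nu c_\nu^p<\infty$.

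For the restriction, fix such a net and, on each cube $Q=Q(x_Q,2^{-\nu})$ centered on $S$, take $P_Q$ to be the degree-$[\alpha]$ Taylor polynomial of $f$ at $x_Q$, whose coefficients are the traces of the derivatives $D^jf$, $|j|\le[\alpha]$. These traces are well defined: since $\alpha=\beta-\tfrac{n-s}{p}$, the orders $|j|\le[\alpha]$ satisfy $\beta-|j|>\tfrac{n-s}{p}$ (at least for non-integer $\alpha$), so each $D^jf\in L^p_{\beta-|j|}(\Rn)$ admits a trace to $S$ by the discussion preceding Theorem~\ref{thm:TracesOfSobolevSpacesToSsets}. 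The error $f-P_Q$ is then controlled on $S\cap Q$ by a trace inequality on the cube comparing the two measures,
\[
\Big(\int_{Q\cap S}|u|^p\,dH^s\Big)^{1/p}\lesssim (2^{-\nu})^{(s-n)/p}\Big(\int_{Q}|u|^p\,dy\Big)^{1/p}+\text{(correction terms in the derivatives of $u$)},
\]
which is legitimate by Ahlfors $s$-regularity because $H^s(Q\cap S)\simeq (2^{-\nu})^s$ while $|Q|\simeq(2^{-\nu})^n$. Spending the smoothness of $f$ on the right-hand side and summing over the boundedly overlapping cubes of the net yields the scaling $(2^{-\nu})^{\beta-(n-s)/p}=(2^{-\nu})^{\alpha}$, which is exactly the Besov exponent and accounts for the identity $\alpha=\beta-\tfrac{n-s}{p}$.

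The delicate point is summability: a crude fixed-scale argument only produces $c_\nu\simeq\|g\|_p$, which is not $\ell^p$-summable. To recover $\sum_\nu c_\nu^p<\infty$ one must instead estimate the \emph{increments} $P^{(\nu+1)}_\pi f-P^{(\nu)}_\pi f$ between consecutive scales, so that the quantity summed is a Littlewood--Paley piece of $g$ concentrated at frequency $\sim 2^{\nu}$; concretely, the residual fractional smoothness $\alpha-[\alpha]$ carried by the traces of the top-order derivatives $D^jf$, $|j|=[\alpha]$, is what drives the $\ell^p$ decay. Here the restriction to the lower dimensional set $S$ converts the quadratic ($\ell^2$, Triebel--Lizorkin) summability intrinsic to the potential space $L^p_\beta=F^{p,2}_\beta$ into the $\ell^p$ (Besov) summability of the target $B^{p,p}_\alpha(S)$, and I expect this conversion, together with the sharp two-measure comparison, to be the main obstacle of the whole argument.

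For the extension I would take a Whitney decomposition $\{W_i\}$ of $\Rn\setminus S$ with a subordinate partition of unity $\{\phi_i\}$, assign to each $W_i$ the polynomial $P_{Q_i}$ attached to the net cube $Q_i$ centered on $S$ of comparable size and position, and set $\mathcal{E}g=g$ on $S$ and $\mathcal{E}g=\sum_i\phi_i P_{Q_i}$ off $S$. The derivatives of $\mathcal{E}g$ on each Whitney cube are estimated by telescoping the differences of neighbouring polynomials, which the Besov data $\{c_\nu\}$ control, and one then checks $\mathcal{E}g\in L^p_\beta(\Rn)$ through a Calder\'on type or fractional Sobolev norm, the dominant contributions coming from cubes abutting $S$, where the two-measure comparison again supplies the reciprocal exponent $\beta=\alpha+\tfrac{n-s}{p}$. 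Finally, the Lebesgue-point structure of the averages defining the trace guarantees $(\mathcal{E}g)|_S=g$ for $H^s$-almost every point of $S$, which closes the argument.
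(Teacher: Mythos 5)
First, a point of comparison: the paper does not prove this statement at all --- it is quoted as a known theorem of A.~Jonsson \cite{Jonsson} (see also \cite{JonssonWallin1978}, \cite{JonssonWallin1984}), so there is no internal proof to measure yours against. The only fair benchmark is Jonsson's own argument, and your outline does follow its general architecture: restriction via Taylor jets whose coefficients are the traces of the derivatives $D^jf$, and extension via a Whitney decomposition of $\Rn\setminus S$ with polynomials glued by a partition of unity. So the strategy is the standard (and correct) one.

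As a proof, however, your text has a genuine gap, and you name it yourself: the $\ell^p$-summability of the coefficients $c_\nu$ in \eqref{BesovSpacesViaLocalApproximations}. You correctly observe that a fixed-scale estimate only yields $c_\nu\lesssim\Vert g\Vert_p$, announce that one must instead estimate the increments between consecutive scales so that each $c_\nu$ becomes a Littlewood--Paley piece of $g$, and then declare this conversion from the $\ell^2$ (Triebel--Lizorkin) structure of $L^p_\beta$ to the $\ell^p$ (Besov) structure of the trace to be ``the main obstacle of the whole argument'' --- without carrying it out. That step is precisely where the theorem lives; everything surrounding it is routine. In the same vein, the two-measure trace inequality on a cube is left with unspecified ``correction terms''; the case of integer $\alpha$ is waved away, even though there $D^jf$ with $|j|=[\alpha]$ lies only in $L^p_{(n-s)/p}(\Rn)$, the exceptional set for pointwise definition can have dimension exactly $s$, and the trace criterion you invoke gives nothing; and the verification that the Whitney extension actually lands in $L^p_\beta(\Rn)$ for fractional $\beta$ (which requires a lifting or difference characterization of the potential space, not merely bounds on classical derivatives of $\sum_i\phi_iP_{Q_i}$) is asserted rather than proved. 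In short, the proposal is a reasonable roadmap to Jonsson's theorem, but it is not a proof of it; for the purposes of this paper the correct move is simply to cite \cite{Jonsson}, as the authors do.
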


Since for $1<p<\infty$ and for nonnegative integers $\beta$, the potential space $L^p_\beta(\mathbb{R}^n)$ coincides with the 
Sobolev space $W^{\beta,p}(\mathbb{R}^n)$, the theorem above, in particular, gives a characterization for traces of Sobolev spaces.

\section{The sharp maximal functions on $s$-sets and corresponding smoothness spaces}\label{SecSharpMaximalFunctions}
\label{sect:poly}
In this section, we introduce Calder\'on type smoothness spaces on $s$-regular subsets of $\R^n$ 
which are defined in terms of fractional sharp maximal functions. We start with the basic notions.

\subsection{Local polynomial approximations}
Let $f\in L^u(S,H^s)$, $0<u\le\infty$, $Q$ be a cube in $\R^n$ and $Q_S=Q\cap S$.
Then \emph{the normalized local best 
approximation} of $f$ on $Q$ in $L^u(S)$ norm is
\[
 \mathcal{E}_k(f,Q)_{L^u(S)}:=\inf\limits_{p\in 
P_{k-1}}\bigg(\kint_{Q_
S}|f-p|^u\;dH^s\bigg)^{1/u},
\]
where $P_{k}$, $k\geq 0$, is a family 
of all polynomials on $\mathbb{R}^n$ of degree at most $k$. We also 
set $P_{-1}:=\{0\}$.

Note that for every pair of cubes such that $Q_1\subset Q_2$, we have
\begin{equation}\label{eqMonotonyOfLocalApproximation}
\mathcal{E}_k(f,Q_1)_{L^u(S)}\le \bigg(\frac{H^s(Q_2\cap
S)}{H^s(Q_1\cap
S)}\bigg)^{1/u}\mathcal{E}_k(f,Q_2)_{L^u(S)}
\end{equation}
and by the $s$-regularity of a set $S$, this further implies that
\begin{equation}\label{eqMonotonyOfLocalApproxSset}
\mathcal{E}_k(f,Q_1)_{L^u(S)}
\le c\bigg(\frac{r_2}{r_1}\bigg)^{s/u}
\mathcal{E}_k(f,Q_2)_{L^u(S)}.
\end{equation}
Here $Q_i=Q(x_i,r_i)$.

In the setting of the Euclidean space, $\mathcal{E}_k(f,Q)_{L^u(\R^n)}$ is the main object of the theory of local polynomial approximation and, 
in particular, it gives a unified framework for the description of various spaces of smooth functions, 
see for example the survey \cite{Brudnyi}.

\subsection{Maximal functions}
Fix $\alpha>0$ and set $k=-[-\alpha]$, i.e. the greatest integer strictly less than $\alpha+1$. For a locally integrable 
function $f$ on $S$, we define \emph{the fractional sharp maximal function }
\begin{equation}\label{SharpMaximalFunctions} 
f^\sharp_{\alpha,u,S}(x):=\sup\limits_{t>0}\frac{1}{t^\alpha}\mathcal{E}_k(f,Q(x,t))_{L^u(S)},\,\,x\in 
S.
\end{equation}
From now on, we will write $f^\sharp_{\alpha,S}$ instead of $f^\sharp_{\alpha,1,S}$ for short.

Since $S$ is an $s$-set, it follows
from \eqref{eqMonotonyOfLocalApproxSset} that the supremum over cubes centered at $x$ in the definition above can be 
replaced by the supremum over all cubes with centers in $S$ containing 
point $x$.
%

When $S=\R^n$, maximal functions of this type were first introduced by Calder\'{o}n \cite{Calderon1972} (see also the paper of Calder\'{o}n and Scott \cite{CalderonScott1978}). It follows from the results of \cite{Calderon1972} that a function belongs to the Sobolev space $W^{k,p}(\R^n)$, $1<p<\infty$, if and only if $f$ and $f^\sharp_{k,\R^n}$ are both in $L^p(\R^n)$.

Motivated by Calder\'{o}n's characterization of Sobolev spaces define the following function spaces on $s$-sets
\begin{equation}\label{eqGeneralizedSobolevClasses}
C_{\alpha}^p (S)=\{f \in L^p(S):\Vert f  \Vert_{C_{\alpha}^{p}} =
\Vert f \Vert_{p} + \Vert f^\sharp_{\alpha,S}
\Vert_{p}<\infty\},\ \ p\geq 1.
\end{equation}

\begin{remark}\label{RmFlatMaximalFunction} 
If, in the definition \eqref{SharpMaximalFunctions}, we make another choice for the degree of projection, namely, 
 we set $k=[\alpha]+1$ i.e. the smallest integer that is strictly larger than $\alpha$, 
we will get another variant of a fractional maximal function. We will denote it by $f^\flat_{\alpha,S}$. 
Clearly, $f^\flat_{\alpha,S}$ differs from $f^\sharp_{\alpha,S}$ only if $\alpha$ is an integer.
\end{remark}

Fractional sharp maximal functions on $\R^n$ and the corresponding smoothness spaces were studied in detail in the monograph of 
R. DeVore and R. Sharpley \cite{DeVoreSharpley84}. Note that in this paper, we use the same notation as \cite{Shvartsman},  
but it differs from the one in \cite{DeVoreSharpley84}.

P. Shvartsman proved in \cite{Shvartsman} that, when $S$ is an $n$-regular subset of $\R^n$, the trace space to $S$ of the Sobolev space
can be characterized via sharp maximal functions, namely,
\[
W^{k,p}(\mathbb{R}^n)|_S=C_{k}^p (S),\ \ p>1.
\]

We aim to study the relationship between the trace spaces of
$W^{k,p}(\mathbb{R}^n)$ to an $s$-set $S$, $n-1<s< n$, and the
spaces of functions  defined in terms of sharp maximal functions
on $S$. Since in this case the trace space
$W^{k,p}(\mathbb{R}^n)|_S$ coincides with the Besov space
$B^{p,p}_{\alpha}(S)$, $\alpha=k-\tfrac{n-s}{p}>0$ (see Theorem \ref{thm:TracesOfSobolevSpacesToSsets}), 
the problem can be also formulated as the
comparison of $B^{p,p}_{\alpha}(S)$ with
$C_{\alpha}^p(S)$.
Note that in this case, $\alpha$ is not an integer and consequently the exact choice of $k$ 
for integer $\alpha$ does not matter, 
see Remark \ref{RmFlatMaximalFunction}.

\subsection{Projectors}
For the study of sharp maximal functions \eqref{SharpMaximalFunctions}, it is useful to construct for every cube $Q\subset \R^n$ 
a projection operator $P_Q$ from $L^1(Q\cap S)$ onto the subspace $P_{k-1}(\R^n)|_{Q\cap S}$, $k\in \N$, such that
\begin{equation*}
\mathcal{E}_k(f,Q)_{L^u(S)}\approx (H^{s}(Q\cap S))^{-1/u}\Vert f-P_Qf\Vert_{L^u(Q\cap S)}.
\end{equation*}


This is possible due to the following property of polynomials. 


\begin{proposition}\label{prop:RevHolderForS-set}
Let $S$ be an $s$-set with
$n-1<s\le n$ and $1\le q,u\le\infty$. Then for every polynomial $p$ 
of degree $k$ and every cube $Q$ centered at $S$, we have
\begin{equation}\label{ReverseHolderForPolynomials}
\bigg(\kint_{Q\cap S}
|p|^q\;dH^s\bigg)^{1/q}\le c\bigg(
\kint_{Q\cap S}|p|^u\;dH^s\bigg)^{1/u},
\end{equation}
where the constant $c>1$ depends on $n$, $k$ and $S$.
\end{proposition}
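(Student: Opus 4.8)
The plan is to reduce the whole statement to a single ``reverse'' estimate and then prove that estimate by a scaling plus compactness argument. First, observe that when $q\le u$ the inequality \eqref{ReverseHolderForPolynomials} is immediate from Jensen's inequality, so only the case $q>u$ requires work. Since $\big(\kint_{Q\cap S}|p|^q\,dH^s\big)^{1/q}\le \sup_{Q\cap S}|p|$ for every $q$, and since $u\ge 1$ gives $\big(\kint_{Q\cap S}|p|^u\,dH^s\big)^{1/u}\ge \kint_{Q\cap S}|p|\,dH^s$, it suffices to establish the key estimate $\sup_{Q\cap S}|p|\le c\,\kint_{Q\cap S}|p|\,dH^s$ for all polynomials $p$ of degree at most $k$. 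Next I would remove the dependence on the cube by the affine change of variables $T(y)=x_0+ry$ taking the unit cube $Q_0=Q(0,1)$ onto $Q=Q(x_0,r)$: the pullback $p\circ T$ is again a polynomial of degree at most $k$, the set $T^{-1}(S)$ is an $s$-set with the \emph{same} Ahlfors constants and contains the origin, and $H^s$ scales by $r^s$, so the normalized averages are preserved. Thus it is enough to prove the (stronger) estimate $\sup_{Q_0}|p|\le c\,\kint_{Q_0\cap S}|p|\,dH^s$, with a constant uniform over all $s$-sets $S$ having the given constants and $0\in S$; since $\sup_{Q_0\cap S}|p|\le\sup_{Q_0}|p|$, this yields the key estimate.

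For this uniform estimate I would argue by contradiction and compactness. The space of polynomials of degree at most $k$ is finite dimensional, so $p\mapsto\sup_{Q_0}|p|$ is a norm on it, and a failure of the estimate would produce normalized polynomials $p_m$ with $\sup_{Q_0}|p_m|=1$, together with $s$-sets $S_m$ (same constants, $0\in S_m$) such that $\kint_{Q_0\cap S_m}|p_m|\,dH^s\to 0$. Passing to a subsequence, $p_m\to p$ uniformly on $Q_0$ with $\sup_{Q_0}|p|=1$, so $p\ne 0$. The measures $\mu_m=H^s|_{Q_0\cap S_m}$ have total mass bounded above and below by $s$-regularity, so after a further subsequence $\mu_m\rightharpoonup\mu$ weakly-$*$ with $\mu\ne 0$. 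Combining the uniform convergence of $|p_m|$ with the weak-$*$ convergence of $\mu_m$ yields $\int|p|\,d\mu=0$, i.e. $\mu$ is carried by the zero set $Z=\{x\in\Rn:p(x)=0\}$ of the nonzero polynomial $p$.

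It remains to derive a contradiction, and this is the heart of the matter and the place where the standing hypothesis $s>n-1$ enters. I would first note that upper $s$-regularity holds at every center: if a ball $B(x,r)$ meets $S_m$ at a point $y$, then $B(x,r)\subset Q(y,2r)$, a cube centered at $S_m$, whence $\mu_m(B(x,r))\le c_2(2r)^s$ for all $x$ with a constant independent of $m$. By weak-$*$ lower semicontinuity on open balls this passes to the limit, $\mu(B(x,r))\le C r^s$, and the mass distribution principle then gives $H^s(A)\ge C^{-1}\mu(A)$ for every Borel set $A$. In particular $H^s(Z)\ge C^{-1}\mu(Z)=C^{-1}\mu(Q_0)>0$. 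On the other hand $Z$ is the real zero set of a nonzero polynomial, hence of Hausdorff dimension at most $n-1$, so $H^s(Z)=0$ because $s>n-1$; this contradiction completes the proof. The main obstacle is exactly this last nondegeneracy step: one must know that a set which is ``$s$-thick'' for some $s>n-1$ cannot be squeezed onto the zero locus of a polynomial, which is precisely why the codimension restriction is needed. As an alternative to the compactness argument, one can instead invoke the Markov inequality for $s$-sets with $s>n-1$ (Theorem 3 on p.~39 of \cite{JonssonWallin1984}) to obtain the key estimate quantitatively.
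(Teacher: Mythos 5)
Your argument is correct, and it is genuinely self-contained where the paper is not: the authors give no proof at all, deferring entirely to Proposition 3 on p.~36 of \cite{JonssonWallin1984} (and to \cite{BrudnyiBrudnyi}), whose argument ultimately rests on the fact that $s$-sets with $s>n-1$ preserve Markov's inequality for polynomials --- the quantitative route you mention in your last sentence. Your reductions are all sound: Jensen handles $q\le u$, the chain $\big(\kint_{Q\cap S}|p|^q\big)^{1/q}\le\sup_{Q\cap S}|p|$ and $\kint_{Q\cap S}|p|\le\big(\kint_{Q\cap S}|p|^u\big)^{1/u}$ reduces everything to the sup-versus-mean estimate, the affine rescaling preserves the regularity constants and the degree, and the compactness step works because the masses $\mu_m(Q_0)$ are bounded below by $c_1$ (lower regularity at the center $0\in S_m$) and above by $c_2$, so the limit measure is nontrivial and supported, by uniform convergence of $|p_m|$, on the zero set of a nonzero limit polynomial. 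The decisive point --- that a limit measure satisfying $\mu(B(x,r))\le Cr^s$ cannot charge an algebraic hypersurface because $s>n-1$ forces $H^s(Z)=0$ --- is exactly where the standing codimension hypothesis enters, and you identify it correctly. What the compactness route buys is transparency about why $s>n-1$ is needed; what it loses is any explicit control of the constant (it depends on $n$, $k$ and the regularity constants of $S$ only through a contradiction argument), whereas the Markov-inequality route of Jonsson--Wallin is quantitative and is the mechanism the rest of the paper implicitly leans on. Two small points worth smoothing: when you pass the upper bound $\mu_m(B(x,r))\le c_2(2r)^s$ to the limit you should phrase it via open balls and then enlarge the radius slightly, as weak-$*$ convergence only gives $\mu(U)\le\liminf\mu_m(U)$ for open $U$; and the statement's blanket ``every cube centered at $S$'' tacitly includes cubes with $\diam Q>\diam S$, a boundary case the $s$-regularity definition does not directly cover --- harmless here since the constant may depend on $S$, but worth a remark.
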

See Proposition 3 on p. 36 in \cite{JonssonWallin1984} for the proof. See also \cite{BrudnyiBrudnyi}, where a more general inequality of such kind is proved.

Actually, the reverse H\"older inequality for polynomials \eqref{ReverseHolderForPolynomials} guarantees that the
maximal functions $f^\sharp_{\alpha,S}$ have most of the properties of their counterparts defined on $\R^n$. In particular, 
we use it to show that, in the definition of the space $C^p_\alpha(S)$, the function $f^\sharp_{\alpha,S}$ can be replaced with
$f^\sharp_{\alpha,u,S}$, $1<u\le p$, without changing the space.


Recall that $Q_S=Q\cap S$. We fix now one more notation, namely, for a cube $Q$ and a function $f\in 
L^u(S,H^s)$, $1\le u\le\infty$, we denote
\[
E_k(f,Q)_{L^u(S)}:=\inf\limits_{p\in 
P_{k-1}}\bigg(\int_{Q_S}|f-p|^u\;dH^s\bigg)^{1/u}.
\]


\begin{proposition} \label{prop:Projection}
Let $k\in \N$ and $Q$ be a cube centered at $S$. Then there exists a linear operator $P_Q:L^1(Q_S)\to 
P_{k-1}$ such that for every $1\le u\le \infty$ and every $f\in 
L^u(S)$ 
\[
\bigg(\int_{Q_S}|f-P_Qf|^u\;dH^s\bigg)^{1/u}\le 
cE_k(f,Q)_{L^u(S)},
\]
with some constant $c$ independent of $Q$.
\end{proposition}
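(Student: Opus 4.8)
The plan is to construct the projector $P_Q$ explicitly by choosing a convenient basis for $P_{k-1}$ and defining $P_Qf$ through a system of linear functionals (moments) that recover the best-approximating polynomial up to a controlled error. First I would fix a cube $Q=Q(x_0,r)$ centered at $S$ and pass to a normalized situation: by translating to $x_0$ and rescaling by $r$, one reduces to a unit cube, so that the constants produced will depend only on $n$, $k$, and the regularity constants $c_1,c_2$ of $S$, but not on the particular $Q$. On this normalized cube I would define $P_Q$ by prescribing that the $H^s$-averages over $Q_S$ of $(f-P_Qf)$ against each monomial $x^\beta$, $|\beta|\le k-1$, vanish; that is, $P_Qf$ is the $L^2(Q_S,H^s)$-orthogonal projection onto $P_{k-1}$. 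This is manifestly linear in $f$, and the key point is that the associated Gram matrix of monomials, with entries $\int_{Q_S}x^\beta x^\gamma\,dH^s$ (normalized), is invertible with a lower bound on its smallest eigenvalue that is uniform over all admissible $Q$.

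The analytical heart of the argument is precisely this uniform invertibility, and I expect it to be the main obstacle. The reverse H\"older inequality for polynomials, Proposition \ref{prop:RevHolderForS-set}, is exactly the tool that rescues us here: it guarantees that for a polynomial $p\in P_{k-1}$, the $L^2(Q_S)$ norm is comparable to the $L^1(Q_S)$ norm with a constant independent of $Q$, and consequently that the norm $\big(\kint_{Q_S}|p|^2\,dH^s\big)^{1/2}$ is comparable to the Euclidean norm of the coefficient vector of $p$ (in the normalized coordinates). This comparability is what forces the Gram matrix to be uniformly positive definite, so that the coefficients of $P_Qf$ are given by bounded linear functionals of the moments $\kint_{Q_S}f\,x^\beta\,dH^s$, with operator bounds depending only on $n$, $k$, and $S$. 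In particular $P_Q\colon L^1(Q_S)\to P_{k-1}$ is well defined and bounded, and $P_Qp=p$ for every $p\in P_{k-1}$, i.e.\ it is a genuine projector.

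With the projector in hand, the stated error bound follows by the standard best-approximation comparison. Let $p^*\in P_{k-1}$ be a near-minimizer for $E_k(f,Q)_{L^u(S)}$. Since $P_Q$ fixes polynomials, $f-P_Qf=(f-p^*)-P_Q(f-p^*)$, so by the triangle inequality
\[
\Big(\int_{Q_S}|f-P_Qf|^u\,dH^s\Big)^{1/u}\le \Big(\int_{Q_S}|f-p^*|^u\,dH^s\Big)^{1/u}+\Big(\int_{Q_S}|P_Q(f-p^*)|^u\,dH^s\Big)^{1/u}.
\]
The first term is (essentially) $E_k(f,Q)_{L^u(S)}$. For the second, I would use boundedness of $P_Q$ together with Proposition \ref{prop:RevHolderForS-set} to pass between the various $L^u$ norms of the polynomial $P_Q(f-p^*)$: its $L^u(Q_S)$ norm is controlled by its $L^1(Q_S)$ norm, which in turn is dominated by $\|f-p^*\|_{L^1(Q_S)}$ and hence, via H\"older on the finite-measure set $Q_S$ and the $s$-regularity of $S$, by a constant times $\|f-p^*\|_{L^u(Q_S)}\le c\,E_k(f,Q)_{L^u(S)}$. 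Combining the two estimates gives the claim with a constant independent of $Q$, as required. The only delicate bookkeeping is tracking that every comparison constant is scale-invariant, which is guaranteed by the normalization in the first step.
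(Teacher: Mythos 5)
Your construction is in essence the same as the paper's: $P_Q$ is the $L^2(Q_S,H^s)$-orthogonal projection onto $P_{k-1}$ (well defined because $s>n-1$ makes $\langle f,g\rangle=\int_{Q_S}fg\,dH^s$ a genuine inner product on $P_{k-1}$), and your final step $f-P_Qf=(f-p^*)-P_Q(f-p^*)$ combined with the $L^u$-boundedness of $P_Q$ is exactly how the paper concludes. The one place where your argument has a real gap is the claim that Proposition \ref{prop:RevHolderForS-set} ``consequently'' yields that $\big(\tfrac{1}{H^s(Q_S)}\int_{Q_S}|p|^2\,dH^s\big)^{1/2}$ is comparable to the Euclidean norm of the coefficient vector of $p$, and hence that the Gram matrix of monomials is uniformly positive definite. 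The reverse H\"older inequality \eqref{ReverseHolderForPolynomials} only compares different $L^q(Q_S)$ norms of a polynomial with one another; it says nothing about how large these norms are relative to the coefficients. The missing lower bound --- that the coefficient norm of $p$ is controlled by its normalized $L^2(Q_S)$ norm uniformly over all admissible $Q$ --- is a norming-set statement of Markov-inequality type: it would fail if $Q_S$ could concentrate near the zero set of a nontrivial polynomial, and ruling this out uses $s>n-1$ through a separate argument (a compactness argument, or Theorem 3 on p.~39 of Jonsson--Wallin), not through the reverse H\"older inequality alone. As written, the ``analytical heart'' of your proof is therefore not actually supplied by the tool you invoke for it.

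The paper sidesteps this issue entirely, and you can too: instead of the monomial basis, take an orthonormal basis $\{p_\beta:|\beta|\le k-1\}$ of $P_{k-1}$ with respect to $\langle f,g\rangle=\int_{Q_S}fg\,dH^s$ and set $P_Qf=\sum_{\beta}\langle f,p_\beta\rangle p_\beta$. Then H\"older's inequality gives
$\|P_Qf\|_{L^u(Q_S)}\le\big(\sum_{\beta}\|p_\beta\|_{L^u(Q_S)}\|p_\beta\|_{L^{u'}(Q_S)}\big)\|f\|_{L^u(Q_S)}$,
and Proposition \ref{prop:RevHolderForS-set} bounds each product by
$c\,(H^s(Q_S))^{1/u-1/2}(H^s(Q_S))^{1/u'-1/2}\|p_\beta\|^2_{L^2(Q_S)}=c$,
with no reference to coefficients at all. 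With this replacement your concluding best-approximation argument goes through verbatim, and the rescaling in your first paragraph becomes unnecessary, since the orthonormality already normalizes everything uniformly in $Q$.
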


\begin{proof}
Following the construction of $P_Q$ from \cite{Shvartsman}, let 
$\{p_\beta:|\beta|\le k-1\}$ denote an orthonormal basis in the linear 
space $P_{k-1}$ with respect to the inner product 
\begin{equation}\label{InnerProduct}
\langle 
f,g\rangle=\int_{Q_S}fg\;dH^s.
\end{equation}
Note that since $s>n-1$, formula \eqref{InnerProduct} defines an inner product indeed.
Set 
\[
P_Qf:=\sum_{|\beta|\le 
k-1}\bigg(\int_{Q_S}fp_\beta\;dH^s\bigg)p_\beta.
\]
We estimate the 
operator norm of $P_Q$ in $L^u$ norm. For every $f\in L^u(Q_S)$, we have
\[
\Vert P_Qf\Vert_{L^u(Q_S)}\le\sum_{|\beta|\le 
k-1}\bigg|\int_{Q_S}f p_\beta\;dH^s\bigg|\Vert p_\beta\Vert_{L^u(Q_S)}.
\] 
By the H\"older inequality,
\[
\Vert 
P_Qf\Vert_{L^u(Q_S)}
\le\bigg(\sum_{|\beta|\le k-1}\Vert 
p_\beta\Vert_{L^u(Q_S)}\Vert p_\beta\Vert_{L^{u'}(Q_S)}\bigg)\Vert 
f\Vert_{L^u(Q_S)},
\]
and by Proposition~\ref{prop:RevHolderForS-set}, 
\[
\begin{split}
 \Vert 
& p_\beta\Vert_{L^u(Q_S)}\Vert p_\beta\Vert_{L^{u'}(Q_S)}\\
\le 
c&\big((H^s(Q_S))^{\frac{1}{u}-\frac{1}{2}}\Vert 
p_\beta\Vert_{L^2(Q_S)}\big)\big((H^s(Q_S))^{\frac{1}{u'}-\frac{1}{2}}\Vert 
p_\beta\Vert_{L^2(Q_S)}\big)=c.
\end{split}
\] 
Hence 
\[
\Vert P_Qf\Vert_{L^u(Q_S)}\le c\Vert f\Vert_{L^u(Q_S)}.
\]

Now, let $p_Q$ denote a polynomial of degree $k-1$ satisfying 
\[
\bigg(\int_{Q_S}|f-p_Q|^u\;dH^s\bigg)^{1/u}=E_k(f,Q)_{L^u(S)}.
\]
Then we can write
\[
f-P_Qf=(f-p_Q)-P_Q(f-p_Q)
\] 
and, consequently, we get the estimate
\[
\begin{split}
\Big(\int_{Q_S}|f-P_Qf|^u\;dH^s\Big)^{1/u}
&\le(1+\Vert P_Qf\Vert_{L^u(Q_S)})E_k(f,Q)_{L^u(S)}\\
&\le c E_k(f,Q)_{L^u(S)}.\qedhere
\end{split}
\]
\end{proof}


The proposition above together with the definition of the sharp maximal function \eqref{SharpMaximalFunctions} implies that
\begin{equation}\label{defSharpMaximalViaProjectors}
f^\sharp_{\alpha,u,S}(x)\approx \sup\limits_{t>0}\frac{1}{t^\alpha}\bigg(\kint_{Q(x,t)\cap S}|f-P_{Q(x,t)}f|^u\;dH^s\bigg)^{1/u}.
\end{equation}

Now we consider some properties of the projectors $P_Q$. 



\begin{lemma}\label{lemma:ProjectProperties} Let function $f\in L^1_{\rm loc}(Q\cap S)$ and cube $Q=Q(x,r)\subset \R^n$ be centered at $x\in S$, then:
\begin{enumerate}
\item\label{item1} $P_Q(\lambda)=\lambda$ for any $\lambda\in \mathbb{R}$;
\item\label{item2} $|P_Q f(y)|\leq c|f|_{Q\cap S}$, \,\,$y\in Q\cap S$;

\item\label{item3} If $Q'$ centered at $S$ is such that
$Q'\subset Q$ and 
\[
H^s(Q'\cap S)\geq cH^s(Q\cap S),
\]
then
\[
| P_Qf(z)-P_{Q'}f(z)|
\le c\kint_{Q\cap S}|f-P_Q|\,dH^s,\,\,\,z\in{Q'\cap S};
\]
\item\label{item4} If $Q'=Q(y,r)$, $y\in S$, such that
$Q'\cap Q\neq\emptyset$, then
\[
|P_Qf(z_1)-P_{Q'}f(z_1)|\le
c
\kint_{Q(z_2,2r)\cap S}|f-P_{Q(z_2,2r)}f|\,dH^s
 \]
for every $z_1,z_2\in Q\cap Q'\cap S$.
\end{enumerate}
\end{lemma}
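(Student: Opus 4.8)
The four claims build on one another, so I would establish them in order: \eqref{item1} and \eqref{item2} are foundational, \eqref{item3} relies on them, and \eqref{item4} reduces to \eqref{item3}. For \eqref{item1}, the point is that $P_Q$ as constructed in Proposition~\ref{prop:Projection} is a genuine projection onto $P_{k-1}$: expanding any $p\in P_{k-1}$ in the orthonormal basis $\{p_\beta\}$ and using orthonormality with respect to the inner product \eqref{InnerProduct} shows $P_Qp=p$, and since $k\ge 1$ forces the constants into $P_{k-1}$, we get $P_Q\lambda=\lambda$. For \eqref{item2}, I would start from $P_Qf(y)=\sum_{|\beta|\le k-1}\big(\int_{Q_S}fp_\beta\,dH^s\big)p_\beta(y)$ and control $\Vert p_\beta\Vert_{L^\infty(Q_S)}$ through the reverse H\"older inequality \eqref{ReverseHolderForPolynomials} with $q=\infty$ and $u=2$; since $\int_{Q_S}|p_\beta|^2\,dH^s=1$, this yields $\Vert p_\beta\Vert_{L^\infty(Q_S)}\le c\,(H^s(Q_S))^{-1/2}$. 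Bounding each coefficient by $\Vert p_\beta\Vert_{L^\infty(Q_S)}\int_{Q_S}|f|\,dH^s$ and summing over the finitely many multi-indices $\beta$ then gives $|P_Qf(y)|\le c\,(H^s(Q_S))^{-1}\int_{Q_S}|f|\,dH^s=c\,|f|_{Q\cap S}$, which is the claim.

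For \eqref{item3}, the key trick is that $P_{Q'}$ fixes polynomials: since $P_Qf\in P_{k-1}$ we have $P_{Q'}(P_Qf)=P_Qf$, whence $P_Qf-P_{Q'}f=P_{Q'}(P_Qf-f)$. Applying \eqref{item2} on the cube $Q'$ to the function $P_Qf-f$ gives, for $z\in Q'\cap S$, the bound $|P_Qf(z)-P_{Q'}f(z)|\le c\kint_{Q'\cap S}|f-P_Qf|\,dH^s$. Enlarging the domain of integration from $Q'\cap S$ to $Q\cap S$, which is legitimate because $Q'\subset Q$, and then using the hypothesis $H^s(Q'\cap S)\ge cH^s(Q\cap S)$ to trade the average over $Q'$ for the average over $Q$, yields the stated estimate.

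For \eqref{item4}, I would bridge $Q$ and $Q'$ through the auxiliary cube $\widetilde Q:=Q(z_2,2r)$. The geometric observation is that if $z_2\in Q\cap Q'$, then every point of $Q$ or of $Q'$ lies within $\ell^\infty$-distance $2r$ of $z_2$, so $Q\cup Q'\subset\widetilde Q$; moreover $\widetilde Q$ is centered at $z_2\in S$, and by $s$-regularity $H^s(Q\cap S)\approx H^s(Q'\cap S)\approx H^s(\widetilde Q\cap S)\approx r^s$, so the comparability hypothesis of \eqref{item3} holds for both inclusions $Q\subset\widetilde Q$ and $Q'\subset\widetilde Q$. Two applications of \eqref{item3}, with $\widetilde Q$ in the role of the larger cube, estimate $|P_{\widetilde Q}f(z_1)-P_Qf(z_1)|$ and $|P_{\widetilde Q}f(z_1)-P_{Q'}f(z_1)|$ each by $c\kint_{\widetilde Q\cap S}|f-P_{\widetilde Q}f|\,dH^s$, and the triangle inequality then finishes the proof.

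The main obstacle I anticipate is organizational rather than technical: ensuring that the constant in \eqref{item3}, which implicitly depends on the measure ratio $H^s(Q\cap S)/H^s(Q'\cap S)$, stays uniformly controlled when \eqref{item3} is invoked twice inside \eqref{item4}. This is precisely where the $s$-regularity of $S$ is indispensable, since it guarantees that all three cubes $Q$, $Q'$ and $\widetilde Q$ have comparable $H^s$-measures with constants depending only on $n$, $k$ and $S$, and not on $r$ or the particular cubes.
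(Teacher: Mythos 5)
Your argument is correct and, for parts (1), (2) and (4), follows the paper's route essentially verbatim: (1) and (2) come from the construction of $P_Q$ together with the reverse H\"older inequality \eqref{ReverseHolderForPolynomials}, and (4) is obtained by inserting the auxiliary cube $Q(z_2,2r)\supset Q\cup Q'$ and applying (3) twice. The only genuine divergence is in (3). The paper first applies the reverse H\"older inequality to the polynomial $P_Qf-P_{Q'}f$ to pass from the supremum over $Q'\cap S$ to an average, then splits $|P_Qf-P_{Q'}f|\le |f-P_Qf|+|f-P_{Q'}f|$ and controls the second term via the near-optimality of $P_{Q'}$ (Proposition \ref{prop:Projection}) combined with the monotonicity \eqref{eqMonotonyOfLocalApproximation} of local best approximations. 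You instead exploit the reproducing property $P_{Q'}(P_Qf)=P_Qf$ to write $P_Qf-P_{Q'}f=P_{Q'}(P_Qf-f)$ and then invoke (2) on the cube $Q'$; this trades the near-optimality of $P_{Q'}$ for its $L^1$-average-to-$L^\infty$ boundedness, which you have already established in proving (2), so your version of (3) is marginally more self-contained (though both ultimately rest on the same reverse H\"older inequality). Both proofs use the hypothesis $H^s(Q'\cap S)\ge cH^s(Q\cap S)$ at the same point, namely to pass from the average over $Q'\cap S$ to the average over $Q\cap S$, and your closing remark that $s$-regularity keeps the constants in (4) uniform is precisely the point the paper leaves implicit.
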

\begin{proof}
Properties \eqref{item1} and \eqref{item2} directly follow from the construction of projectors $P_Q$. Let us prove \eqref{item3}. 
By \eqref{ReverseHolderForPolynomials}, we have
\begin{equation*}
\begin{split}
\sup\limits_{Q'_S}&| P_Qf-P_{Q'}f|
\le \,c\kint_{Q'_S}|P_Qf-P_{Q'}f|\;dH^s\\
&\le \,c\bigg[\kint_{Q'_S}|f-P_{Q}f|\;dH^s+\kint_{Q'_S}|f-P_{Q'}f|\;dH^s\bigg]
\\
&\le \,c 
\bigg[\kint_{Q_
S}|f-P_{Q}f|\;dH^s+\mathcal{E}_k(f,Q')_{L^1(S)}\bigg]\le c\kint_{Q_S}|f-P_{Q}f|\;dH^s.
\end{split}
\end{equation*}

Note that if cubes $Q=Q(x,r)$ and $Q'=(y,r')$ are such that $Q\cap Q'\neq\emptyset$  then $Q,Q'\subset Q(z_2,2r)$ for any $z_2\in Q\cap Q'$. Then, since
\[
\begin{split}
&|P_Qf(z_1)-P_{Q'}f(z_1)|\\
\le& |P_Qf(z_1)-P_{Q(z_2,2r)}f(z_1)|+|P_{Q'}f(z_1)-P_{Q(z_2,2r)}f(z_1)|,
\end{split}
\]
the statement \eqref{item4} easily follows from \eqref{item3}.
\end{proof}

\begin{remark}
If $x\in S$ is a Lebesgue point of a function
$f\in L^1_{\rm loc}(S)$, then, by definition,
\begin{equation}\label{eqLebesguePointThroughtDifference}
\lim\limits_{r\to 0}\kint_{Q(x,r)\cap S}|f-f(x)|\,dH^s=0.
\end{equation}
By statements \eqref{item1} and \eqref{item2} of Lemma \ref{lemma:ProjectProperties}, we have
\begin{equation*}
|P_{Q(x,r)}f(x)-f(x)|=|P_{Q(x,r)}[f-f(x)](x)|\leq
c\kint_{Q(x,r)\cap S}|f-f(x)|dH^s.
\end{equation*}

Since almost every point of $S$ is a Lebesgue point of a function $f\in L^1_{\rm loc}(S)$ (see for example \cite{Heinonen}), we have
\begin{equation}\label{CovergenceOfP_QatPoint}
\lim\limits_{r\to 0}P_{Q(x,r)}f(x)=f(x)\,\,\,\text{a.e. on}\,\,S. 
\end{equation}
\end{remark}

The following lemma is a special case of Theorem 1 in \cite{Ihnatsyeva} (see also Theorem 1 in \cite{IvanishkoKrotov}). 
For the sake of completeness, we will sketch the proof here.

\begin{lemma}\label{lemmaSobolevPoincareIneguality}
Suppose that $\alpha>0$, $q\geq 1$ and $f\in
L_{\rm loc}^{1}(S)$. Then for any cube $Q=Q(x,r)$, $x\in S$, we have
\begin{equation}
\bigg(\kint_{Q\cap S}|f-P_Qf|^q\,dH^s\bigg)^{1/q}\leq c
r^\alpha \bigg(\kint_{2Q\cap S}(f^\sharp_{\alpha,S})^\sigma\,dH^s\bigg)^{1/\sigma},
\end{equation}
where $\tfrac{1}{\sigma}=\tfrac{1}{q}+\tfrac{\alpha}{s}$.
\end{lemma}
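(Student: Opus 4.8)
The plan is to reduce the statement to a pointwise bound of $|f-P_Qf|$ by a Riesz potential of $f^\sharp_{\alpha,S}$ over $2Q\cap S$, and then to apply the fractional integration (Sobolev) theorem for such potentials on the $s$-regular set $S$. Writing, for $y\in Q\cap S$,
\[
I_\alpha g(y)=\int_{2Q\cap S}\frac{g(w)}{|y-w|^{s-\alpha}}\,dH^s(w),
\]
I would first establish that for $H^s$-a.e.\ $y\in Q\cap S$ one has
\[
|f(y)-P_Qf(y)|\le c\,I_\alpha(f^\sharp_{\alpha,S})(y)+c\,r^\alpha\kint_{2Q\cap S}f^\sharp_{\alpha,S}\,dH^s.
\]

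To obtain this pointwise estimate, fix a Lebesgue point $y\in Q\cap S$ and consider the concentric cubes $Q_j=Q(y,2^{-j}r)$, $j\ge 0$, which are nested, centered at $y\in S$, and contained in $2Q$. By \eqref{CovergenceOfP_QatPoint} and telescoping,
\[
f(y)-P_{Q_0}f(y)=\sum_{j=0}^\infty\big(P_{Q_{j+1}}f(y)-P_{Q_j}f(y)\big).
\]
Since consecutive cubes have comparable $H^s$-measure by $s$-regularity, Lemma~\ref{lemma:ProjectProperties}\eqref{item3} bounds each term by $c\kint_{Q_j\cap S}|f-P_{Q_j}f|\,dH^s$, which by Proposition~\ref{prop:Projection} is at most $c\,\mathcal{E}_k(f,Q_j)_{L^1(S)}$. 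Here is the step that produces the Sobolev gain: because $Q_j$ has center $y\in S$ and contains every $w\in Q_j\cap S$, the characterization of $f^\sharp_{\alpha,S}$ by non-centered cubes (the observation following \eqref{SharpMaximalFunctions}) yields $\mathcal{E}_k(f,Q_j)_{L^1(S)}\le c\,(2^{-j}r)^\alpha f^\sharp_{\alpha,S}(w)$ for a.e.\ such $w$; averaging in $w$ replaces this pointwise value by $\kint_{Q_j\cap S}f^\sharp_{\alpha,S}\,dH^s$. Summing the resulting series and dominating it by an integral gives $c\,I_\alpha(f^\sharp_{\alpha,S})(y)$ for $|f(y)-P_{Q(y,r)}f(y)|$. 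It then remains to replace $P_{Q(y,r)}f(y)$ by $P_Qf(y)$: since $Q=Q(x,r)$ and $Q(y,r)$ both lie in $2Q$ with $H^s$-measure comparable to $H^s(2Q\cap S)$, two applications of Lemma~\ref{lemma:ProjectProperties}\eqref{item3} through the intermediate cube $2Q$ give $|P_Qf(y)-P_{Q(y,r)}f(y)|\le c\,\mathcal{E}_k(f,2Q)_{L^1(S)}\le c\,r^\alpha\kint_{2Q\cap S}f^\sharp_{\alpha,S}\,dH^s$, again via the non-centered characterization; this is the second term above.

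Finally I would take $L^q(Q\cap S)$ averages. For the potential term, the fractional integration theorem for Riesz potentials on the $s$-regular set $S$ maps $L^\sigma(2Q\cap S)$ into $L^q(Q\cap S)$ precisely when $\tfrac1\sigma=\tfrac1q+\tfrac\alpha s$; this estimate is scale invariant, so passing from unnormalized to normalized norms through $H^s(Q\cap S)\approx H^s(2Q\cap S)\approx r^s$ produces exactly the factor $r^\alpha$ together with the average $\big(\kint_{2Q\cap S}(f^\sharp_{\alpha,S})^\sigma\big)^{1/\sigma}$. The remaining single-scale term is independent of $y$, and in the relevant range $\sigma\ge 1$ Hölder's inequality gives $\kint_{2Q\cap S}f^\sharp_{\alpha,S}\le\big(\kint_{2Q\cap S}(f^\sharp_{\alpha,S})^\sigma\big)^{1/\sigma}$, so it is absorbed into the same bound. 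I expect the main obstacle to be the pointwise step: one must invoke the non-centered form of $f^\sharp_{\alpha,S}$ so that the telescoping produces \emph{averages} (hence a genuine Riesz potential with the Sobolev gain) rather than pointwise values, and at the same time compare the projection $P_Q$ on the fixed cube centered at $x$ with the projections $P_{Q(y,r)}$ at the moving point $y$ without ever leaving $2Q$. Once the pointwise estimate is in place, the fractional-integration step is standard, modulo the scaling bookkeeping that recovers the precise power $r^\alpha$.
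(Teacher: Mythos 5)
Your pointwise step is sound and essentially reproduces the telescoping in the paper's proof (the chain of cubes $Q_j=Q(y,2^{-j}r)$, Lemma~\ref{lemma:ProjectProperties}\eqref{item3}, and the comparison of $P_{Q(y,r)}$ with $P_Q$ through $2Q$ all appear there in the estimates \eqref{eqIntegralRepresentation} and \eqref{eqEstimateOfDifferenceOfTwoMeansWithAverage}). The gap is in the integration step. The lemma is asserted for all $q\ge 1$ with $\tfrac1\sigma=\tfrac1q+\tfrac\alpha s$, so $\sigma$ is \emph{not} restricted to be larger than $1$: already for $q=1$ one has $\sigma=s/(s+\alpha)<1$, and $\sigma\le 1$ also occurs in the applications (e.g.\ in Lemma~\ref{lemma:sharpMaximal} when $\alpha/s$ is not small). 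The strong-type fractional integration theorem $I_\alpha:L^\sigma\to L^q$ on an $s$-regular set holds only for $\sigma>1$; at $\sigma=1$ only the weak-type estimate survives, and for $\sigma<1$ the statement is meaningless as an $L^\sigma\to L^q$ bound. So your reduction to the Hardy--Littlewood--Sobolev theorem proves the lemma only in the subrange $\sigma>1$, and your Hölder step for the single-scale term likewise needs $\sigma\ge 1$. The whole point of the exponent $\sigma<q$ on the right-hand side is that it encodes a self-improving (Sobolev--Poincar\'e) gain that cannot be obtained by feeding an additive potential bound into HLS across the full range.

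What the paper does instead is Hedberg's truncation trick: it splits the integral $\int_0^r t^{\alpha-1}u(x_0,t)\,dt$ at the level $\tau=rI^{\sigma/s}(f^\sharp_{\alpha,S}(x_0))^{-\sigma/s}$, using the sharp maximal function pointwise on $(0,\tau)$ and the average $I$ from \eqref{eqMeanOfSharpMaxFunction} on $(\tau,r)$. This yields the \emph{multiplicative} pointwise bound \eqref{eqRateOfSteklovMeansApproximationWithS}, namely $|f(x_0)-P_{Q(x_0,r)}f(x_0)|\le cr^\alpha(f^\sharp_{\alpha,S}(x_0))^{1-\alpha\sigma/s}I^{\alpha\sigma/s}$; raising this to the power $q$ and using $q(1-\alpha\sigma/s)=\sigma$ gives the claim directly, with no mapping theorem for potentials and no restriction on $\sigma$. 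To repair your argument you would need to replace the HLS step by this truncation (or an equivalent good-$\lambda$/rearrangement argument), i.e.\ interpolate between the two bounds $u(x_0,t)\le f^\sharp_{\alpha,S}(x_0)t^{\alpha}\cdot t^{-\alpha}$ and \eqref{eqAveragePrinciple} rather than summing the dyadic pieces into a single Riesz potential.
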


\begin{proof}
Let $x_0$ be a Lebesgue point of a function $f$. We will show that 
\begin{equation}\label{eqRateOfSteklovMeansApproximationWithS}
|f(x_0) - P_{Q(x_0,r)}f(x_0)| \le c r^\alpha\bigg(
f^\sharp_{\alpha,S}(x_0)\bigg)^{1-\alpha \sigma /s}
\bigg(\kint_{Q_S(x_0,r)}(f^\sharp_{\alpha,S} )^\sigma \, dH^s \bigg)^{\alpha/s}.
\end{equation}
For every cube $Q(x,t)\subset \R^n$, let $Q_S(x,t)$ denote the set $Q(x,t)\cap S$ and consider
\begin{equation*}
u(x,t)=\frac{1}{t^{\alpha}}
\kint_{Q_S(x,t)}|f - P_{Q(x,t)}f|
\, dH^s,
\end{equation*}
By Proposition \ref{prop:Projection} and \eqref{eqMonotonyOfLocalApproxSset}, we have
\begin{equation}\label{eqGrowthOfU}
u(x,\tau) \le c u(x,t) \;\;\text{if} \; \tau \le t \le 2 \tau.
\end{equation}
If $Q_k = Q(x_0, 2^{-k} r)$, $k\ge0$, then by \eqref{CovergenceOfP_QatPoint}, Lemma \ref{lemma:ProjectProperties} and \eqref{eqGrowthOfU} respectively,
we obtain
\begin{equation}\label{eqIntegralRepresentation}
\begin{split}
|f(x_0) - P_{Q(x_0,r)}f(x_0)| 
&= \left|\sum_{k=0}^\infty (P_{Q_{k+1}}f(x_0) - P_{Q_k}f(x_0)) \right|
\\
&\le c r^\alpha \sum_{k=0}^\infty 2^{-k\alpha} u (x_0, 2^{-k} r)\\
&\le
c r^\alpha u(x_0,r)+c\int_{0}^{r}t^{\alpha} u(x_0,t)
\frac{d t}{t}.
\end{split}
\end{equation}
Let
\begin{equation}\label{eqMeanOfSharpMaxFunction}
I=\bigg(\kint_{Q_S(x_0,r)}
(f^\sharp_{\alpha,S})^\sigma \,dH^s \bigg)^{1/\sigma}
\end{equation}
and consider two cases:
\begin{enumerate}
\item \label{case1} If $f^\sharp_{\alpha,S}(x_0)\le I $ then by \eqref{defSharpMaximalViaProjectors}, we have
\begin{equation*}
\int_{0}^{r} t^{\alpha-1} u(x_0,t)\,d t \le c
r^\alpha f^\sharp_{\alpha,S}(x_0)
\le c r^\alpha (f^\sharp_{\alpha,S}(x_0))^{1-\frac{\alpha \sigma}{s}} I^{\frac{\alpha
\sigma}{s}}.
\end{equation*}
\item If $f^\sharp_{\alpha,S}(x_0)>I$, then define
$\tau=rI^{\sigma/s}(f^\sharp_{\alpha,S})^{-\sigma/s} < r$ and write
\begin{equation*}
\int_{0}^{r} t^{\alpha-1}u(x_0,t)\,d t= \bigg(\int_{0}^{\tau} +
\int_{\tau}^{r}\bigg)t^{\alpha-1}u(x_0,t)\,dt\equiv I_1+I_2.
\end{equation*}
Then
\begin{equation*}
I_1\le c\tau^\alpha f^\sharp_{\alpha,S}(x_0)=c r^\alpha (f^\sharp_{\alpha,S}(x_0))^{1-\frac{\alpha \sigma}{s}} I^{\frac{\alpha
\sigma}{s}}.
\end{equation*}
To estimate $I_2$, note that for every $t\leq r$, we have
\begin{equation}\label{eqAveragePrinciple}
u(x_0,t) \le c\bigg( \kint_{Q_S(x_0,t)} (f^\sharp_{\alpha,S}) ^\sigma \, dH^s \bigg)^{1/\sigma}\le c
\bigg(\frac{r}{t}\bigg)^{s/\sigma}I,
\end{equation}
and therefore
\begin{equation*}
I_2\le c Ir^{s/\sigma}\int_{\tau}^{r} t^{\alpha-1-s/\sigma}\, dt
\le c I\bigg(\frac{r}{\tau}\bigg)^{s/\sigma}\tau^\alpha\le cr^\alpha I,
\end{equation*}
Consequently, we have the same estimate as in case \eqref{case1} for the integral in \eqref{eqIntegralRepresentation}. 
\end{enumerate}

To estimate $u(x_0,r)$, we use \eqref{eqAveragePrinciple} with $t=r$. Thus
\begin{equation*}
u(x_0,r)=[u(x_0,r)]^{1-\frac{\alpha
\sigma}{s}}[u(x_0,r)]^{\frac{\alpha \sigma}{s}}\le
[f^\sharp_{\alpha,S}(x_0)]^{1-\frac{\alpha
\sigma}{s}}I^{\frac{\alpha \sigma}{s}},
\end{equation*}
which finishes the proof of \eqref{eqRateOfSteklovMeansApproximationWithS}. 

Now consider
\begin{equation*}
\begin{split}
\bigg(\kint_{Q_S}|f - P_Q f|^q \,dH^s\bigg)^{1/q}
\le
&\bigg(\kint_{Q_S}|f(y) -
P_{Q(y,r)}f(y) |^q \, dH^s(y)\bigg)^{1/q} \\
&+\bigg(\kint_{Q_S}|P_{Q(y,r)}f(y)-P_Q
f(y)|^q \,dH^s(y)\bigg)^{1/q}\\
\equiv& I_1+I_2.
\end{split}
\end{equation*}
By \eqref{eqRateOfSteklovMeansApproximationWithS}, we have
\begin{equation*}
I_1\le c r^\alpha\bigg(\kint_{Q_S}(f^\sharp_{\alpha,S}(y))^{q(1-\alpha \sigma/s)}
\Big(\kint_{Q_S(y,r)}(f^\sharp_{\alpha,S} )^\sigma\,dH^s
\Big)^{q\alpha/s}dH^s(y) \bigg)^{1/q},
\end{equation*}
and since for every $y\in Q(x,r)$, the cube $Q(y,r)\subset Q(x,2r)$, we have
\begin{equation*}
\begin{split}
I_1&\le c r^\alpha
\bigg(\kint_{Q_S(x,2r)}(f^\sharp_{\alpha,S})^\sigma\,dH^s \bigg)^{\alpha/s}\bigg(
\kint_{Q_S}(f^\sharp_{\alpha,S})^\sigma\,dH^s(y)\bigg)^{1/q} 
\\
&\le
cr^\alpha\kint_{Q_S(x,2r)}(f^\sharp_{\alpha,S})^\sigma\,dH^s\bigg)^{1/\sigma}.
\end{split}
\end{equation*}
If $y\in Q(x,r)\cap S$ and $z\in Q(x,2r)\cap S$, then by statement \eqref{item4} of Lemma \ref{lemma:ProjectProperties}, we have
\begin{equation*}
|P_{Q(y,r)}f(y)-P_Q f(y)|\le
c\kint_{Q_S(x,2r)}|f-P_{Q(x,2r)}f|\,dH^s\le c r^\alpha f^\sharp_{\alpha,S}(z).
\end{equation*}
Since the last inequality holds for any $z\in Q_S(x,2r)$, we have
\begin{equation}\label{eqEstimateOfDifferenceOfTwoMeansWithAverage}
|P_{Q(y,r)}f(y)-P_Q f(y)|\le cr^\alpha
\bigg(\kint_{Q_S(x,2r)}
(f^\sharp_{\alpha,S})^\sigma \,dH^s \bigg)^{1/\sigma}
\end{equation}
for every $y\in Q_S$. This completes the proof.
\end{proof}

Applying the H\"{o}lder inequality and Lemma \ref{lemmaSobolevPoincareIneguality} respectively, we get the following statement.
\begin{lemma}\label{lemma:sharpMaximal}
Let $\alpha>0$, $u>1$ and $f\in L^1_{\rm loc}(S)$, then
\begin{equation}\label{EquivalenaceOfFsharpWithExponent}
f^\sharp_{\alpha,S}\le f^\sharp_{\alpha,u,S}(x) \le c M_\sigma (f^\sharp_{\alpha,S})(x),
\end{equation}
where $1/\sigma=1/u+\alpha/s$, $M$ is the Hardy-Littlewood maximal operator and $M_\sigma(g)=[M(|g|^\sigma)]^{1/\sigma }$.
\end{lemma}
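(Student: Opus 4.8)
The plan is to establish the two inequalities separately: the first is an immediate consequence of the H\"older inequality applied to the local approximations, and the second follows directly from the Sobolev--Poincar\'e estimate of Lemma \ref{lemmaSobolevPoincareIneguality}, the point being that the exponent $\sigma$ in the statement is exactly the one produced by that lemma.

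For the left inequality $f^\sharp_{\alpha,S}\le f^\sharp_{\alpha,u,S}$ I would work at the level of a single cube. Since the integrals are normalized averages and $u>1$, the H\"older inequality gives, for every polynomial $p$,
\[
\kint_{Q_S}|f-p|\,dH^s\le\bigg(\kint_{Q_S}|f-p|^u\,dH^s\bigg)^{1/u}.
\]
Taking $p$ to be the $L^u$-best approximation on the right and then passing to the infimum on the left yields $\mathcal{E}_k(f,Q(x,t))_{L^1(S)}\le\mathcal{E}_k(f,Q(x,t))_{L^u(S)}$. Dividing by $t^\alpha$ and taking the supremum over $t>0$ produces the claimed pointwise bound.

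For the right inequality, I would fix $t>0$, write $Q=Q(x,t)$, first pass from $\mathcal{E}_k(f,Q)_{L^u(S)}$ to the projector formulation via \eqref{defSharpMaximalViaProjectors}, and then apply Lemma \ref{lemmaSobolevPoincareIneguality} with $q=u$, so that $\tfrac{1}{\sigma}=\tfrac1u+\tfrac{\alpha}{s}$ matches the $\sigma$ in the statement. This gives
\[
\frac{1}{t^\alpha}\mathcal{E}_k(f,Q)_{L^u(S)}\le\frac{c}{t^\alpha}\bigg(\kint_{Q_S}|f-P_Qf|^u\,dH^s\bigg)^{1/u}\le c\bigg(\kint_{Q(x,2t)\cap S}(f^\sharp_{\alpha,S})^\sigma\,dH^s\bigg)^{1/\sigma},
\]
where the factor $t^\alpha$ cancels. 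The right-hand side is an average of $(f^\sharp_{\alpha,S})^\sigma$ over a cube centered at $x$, hence is dominated by $M\big((f^\sharp_{\alpha,S})^\sigma\big)(x)$; raising to the power $1/\sigma$ identifies it with $M_\sigma(f^\sharp_{\alpha,S})(x)$. Taking the supremum over $t>0$ then yields $f^\sharp_{\alpha,u,S}(x)\le cM_\sigma(f^\sharp_{\alpha,S})(x)$.

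There is no genuine analytic obstacle here, since both ingredients are already in hand; the only point requiring care is the bookkeeping. One must check that the dilation from $Q(x,t)$ to $Q(x,2t)$ introduced by Lemma \ref{lemmaSobolevPoincareIneguality} is harmless --- it is, because the enlarged cube is still centered at $x$, so its average is absorbed into the (centered) maximal function $M$ --- and that the equivalence \eqref{defSharpMaximalViaProjectors} is applied in the correct direction, so that all the implicit constants combine into a single $c$ that is independent of $t$ and $x$.
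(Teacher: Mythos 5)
Your argument is correct and is exactly the route the paper takes: the paper dispatches this lemma in one line, observing that the left inequality follows from the H\"older inequality applied to the normalized local best approximations and the right one from Lemma \ref{lemmaSobolevPoincareIneguality} with $q=u$, which is precisely your two-step plan. Your additional bookkeeping remarks (that $P_Qf$ is merely a competitor in the infimum defining $\mathcal{E}_k$, and that the doubled cube $Q(x,2t)$ is still centered at $x$ so its average is controlled by $M$) are accurate and fill in the details the paper leaves implicit.
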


\begin{remark}\label{remark:OnDefinitionofCpa}
Recall that the function space $C_{\alpha}^p (S)$ is defined as
the set of functions $f\in L^p(S)$ such that
$f^\sharp_{\alpha,1,S}\in L^p(S)$. By Lemma \ref{lemma:sharpMaximal} and the $L^q$-boundedness of the 
maximal operator for $q>1$, the set of functions such that $f^\sharp_{\alpha,u,S}\in L^p(S)$ is independent of $u$ as long as $1\leq u\leq p$. Thus,
we can use any value of $u$ in the definition of the space $C^p_\alpha(S)$. 
Furthermore, the next lemma shows that to define $C^p_\alpha(S)$, it is enough to consider local best approximations 
on cubes with side length less than any fixed positive number.
\end{remark}

\begin{lemma}\label{CharacterizationViaResrictedFunctions}
Let $p>1$, $1\leq u\leq p$ and $\gamma>0$. Then $C^p_\alpha(S)$ coincides with the space 
\[
\{f\in 
L^p(S):\,\sup\limits_{0<t<\gamma}t^{-\alpha}\mathcal{E}_k(f,Q(\cdot,t))_{L^u(S)}\in 
L^p(S)\}.
\]
\end{lemma}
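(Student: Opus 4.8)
The plan is to write $g_\gamma(x):=\sup_{0<t<\gamma}t^{-\alpha}\mathcal{E}_k(f,Q(x,t))_{L^u(S)}$ and to prove the two inclusions separately. One inclusion is immediate: since the truncated supremum ranges over a subset of the scales, we have $g_\gamma(x)\le f^\sharp_{\alpha,u,S}(x)$ pointwise, and Remark \ref{remark:OnDefinitionofCpa} shows that every $f\in C^p_\alpha(S)$ satisfies $f^\sharp_{\alpha,u,S}\in L^p(S)$ for $1\le u\le p$; hence $g_\gamma\in L^p(S)$ and $f$ belongs to the truncated space.

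For the reverse inclusion, suppose $f\in L^p(S)$ with $g_\gamma\in L^p(S)$. First I would reduce to the case $u=1$: since $u\ge 1$, Jensen's inequality applied to the normalized measure on $Q_S$ gives $\mathcal{E}_k(f,Q)_{L^1(S)}\le \mathcal{E}_k(f,Q)_{L^u(S)}$ for every cube $Q$ centered at $S$, so that $\sup_{0<t<\gamma}t^{-\alpha}\mathcal{E}_k(f,Q(\cdot,t))_{L^1(S)}\le g_\gamma\in L^p(S)$. It then suffices to control the full maximal function, which I would split according to the scale of the cube, $f^\sharp_{\alpha,1,S}(x)\le \sup_{0<t<\gamma}t^{-\alpha}\mathcal{E}_k(f,Q(x,t))_{L^1(S)}+\sup_{t\ge\gamma}t^{-\alpha}\mathcal{E}_k(f,Q(x,t))_{L^1(S)}$, where the first term is already known to be in $L^p(S)$.

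For the second (large-scale) term I would use that $0\in P_{k-1}$, so $\mathcal{E}_k(f,Q(x,t))_{L^1(S)}\le \kint_{Q(x,t)\cap S}|f|\,dH^s$; hence for $t\ge\gamma$ it is bounded by $\gamma^{-\alpha}M_Sf(x)$, where $M_S$ denotes the Hardy--Littlewood maximal operator on $(S,H^s)$. Because $S$ is an $s$-set, $H^s|_S$ is doubling and, as $p>1$, $M_S$ is bounded on $L^p(S)$ (the same boundedness invoked in Remark \ref{remark:OnDefinitionofCpa}); together with $f\in L^p(S)$ this puts the large-scale term in $L^p(S)$, and adding the two contributions yields $f^\sharp_{\alpha,1,S}\in L^p(S)$, i.e. $f\in C^p_\alpha(S)$. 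I expect no serious obstacle; the only point needing care is the endpoint $u=p$, where the reduction to $u=1$ is what makes the argument work, since the large-scale term must be dominated by a maximal operator bounded on $L^p(S)$, and for $u=1$ the ordinary Hardy--Littlewood operator suffices precisely because $p>1$.
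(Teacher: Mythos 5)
Your proof is correct and follows essentially the same route as the paper: split the sharp maximal function at the scale $\gamma$, observe that the large-scale part is dominated by $\gamma^{-\alpha}$ times a Hardy--Littlewood maximal function on $(S,H^s)$, and invoke its $L^p$-boundedness (available since $H^s|_S$ is doubling and $p>1$), while the forward inclusion follows from the monotonicity of the truncated supremum together with the $u$-independence of $C^p_\alpha(S)$ recorded in Remark \ref{remark:OnDefinitionofCpa}. The only point of divergence is the endpoint $u=p$: the paper bounds the large-scale term by $c(M(f^u))^{1/u}$, which forces $p/u>1$ and hence a separate argument for $u=p$ via the pointwise inequality \eqref{EquivalenaceOfFsharpWithExponent} of Lemma \ref{lemma:sharpMaximal}, whereas you first pass to $u=1$ by Jensen's inequality applied to the normalized best approximations and then only need the ordinary maximal operator on $L^p$. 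Your reduction treats all $1\le u\le p$ uniformly and is slightly more economical at that step, though both arguments ultimately rest on the same two ingredients (the comparison of $f^\sharp_{\alpha,u,S}$ for different $u$, and maximal function bounds).
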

\begin{proof}
First let $1\le u<p$. For every $x\in S$, we have
\begin{equation}\label{equivToRestrictedSMfunct} 
\begin{split}
\sup\limits_{t\geq \gamma}\frac{1}{t^\alpha}\mathcal{E}_k(f,Q(x,t))_{L^u(S)}
& \le \gamma^{-\alpha}\sup\limits_{t\geq 
\gamma}\bigg(\kint_{Q(x,t)\cap S}|f|^u\;dH^s\bigg)^{1/u}\\
&\le c (M(f^u)(x))^{1/u}
\end{split}
\end{equation}
and the claim follows from the $L^q$-boundedness of the 
maximal operator for $q=p/u>1$.

If $u=p$, we take some $1\le q<p$. By 
\eqref{EquivalenaceOfFsharpWithExponent}, we have 
\[\begin{split}
&\Vert 
f^\sharp_{\alpha,p,S}\Vert_{L^p(S)}\le c\Vert 
f^\sharp_{\alpha,q,S}\Vert_{L^p(S)}
\\
&\le \Vert\sup\limits_{0<t<\gamma 
}\frac{1}{t^\alpha}\mathcal{E}_k(f,Q(x,t))_{L^q(S)}\Vert_{L^p(S)}+\Vert\sup\limits_{t\geq 
\gamma}\frac{1}{t^\alpha}\mathcal{E}_k(f,Q(x,t))_{L^q(S)}\Vert_{L^p(S)}, 
\end{split}
\]
where the first summand is bounded by the assumption and the second 
by \eqref{equivToRestrictedSMfunct}.
\end{proof}

\section{Comparison with Besov spaces}
\label{sect:main}

The following theorem is the main result of the present paper.

\begin{theorem}\label{thm:besov}
Let $S$ be an $s$-set with $n-1<s\le n$, 
$1<p\le\infty$ and $\alpha$ be a non-integer positive number. Then \begin{equation}\label{Embedding}
B_\alpha^{p,p}(S)\subset C_\alpha^p(S)\subset 
B_{\alpha}^{p,\infty}(S). \end{equation}
\end{theorem}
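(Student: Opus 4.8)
The plan is to prove the two inclusions in \eqref{Embedding} separately. The first is a soft covering argument directly from the definitions, while the second rests on the Sobolev--Poincaré estimate of Lemma \ref{lemmaSobolevPoincareIneguality} together with the boundedness of the maximal operator.

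\textbf{Inclusion $B_\alpha^{p,p}(S)\subset C_\alpha^p(S)$.} Take $f\in B_\alpha^{p,p}(S)$ with the associated sequence $\{c_\nu\}$, $\sum_\nu c_\nu^p<\infty$. The idea is to dominate $f^\sharp_{\alpha,S}$ pointwise by a discrete maximal function built from the net approximations. Given $x\in S$ and $t>0$, I choose the dyadic index $\nu$ with $2^{-\nu}\approx t$ and a net $\pi_\nu$ of mesh $2^{-\nu}$ such that $Q(x,t)$ lies in a single net cube $Q'$ of comparable size; finitely many shifted copies $\pi_\nu^{(1)},\dots,\pi_\nu^{(L)}$ of the standard grid guarantee such a containment uniformly in $x$, and the Besov bound applies to each of them. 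Since the piecewise polynomial $P_{\pi_\nu}f$ restricts to one polynomial of degree $[\alpha]=k-1$ on $Q'$, the definition of $\mathcal{E}_k$ and $s$-regularity give
\[
t^{-\alpha}\mathcal{E}_k(f,Q(x,t))_{L^1(S)}\le c\,2^{\nu\alpha}\kint_{Q'\cap S}|f-P_{\pi_\nu}f|\,dH^s .
\]
Hence $f^\sharp_{\alpha,S}(x)\le c\sup_{\nu,j}g^{(j)}_\nu(x)$, where $g^{(j)}_\nu$ equals the average $2^{\nu\alpha}\kint_{Q'\cap S}|f-P_{\pi^{(j)}_\nu}f|\,dH^s$ over the cube $Q'$ of $\pi^{(j)}_\nu$ containing the point. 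As each $g^{(j)}_\nu$ is constant on net cubes, Jensen's inequality on each cube and summation yield $\|g^{(j)}_\nu\|_p^p\le 2^{\nu\alpha p}\|f-P_{\pi^{(j)}_\nu}f\|_{L^p(S)}^p\le c_\nu^p$. Therefore $\|f^\sharp_{\alpha,S}\|_p^p\le c\sum_{\nu,j}\|g^{(j)}_\nu\|_p^p\le cL\sum_\nu c_\nu^p<\infty$, and with $f\in L^p(S)$ this gives $f\in C_\alpha^p(S)$.

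\textbf{Inclusion $C_\alpha^p(S)\subset B_\alpha^{p,\infty}(S)$.} Take $f\in C_\alpha^p(S)$ and, for each net $\pi_\nu$ of mesh $2^{-\nu}$, build $P_{\pi_\nu}f\in P_{[\alpha]}(\pi_\nu)$ as follows. On each net cube $Q'$ meeting $S$ I pick $y_{Q'}\in Q'\cap S$, enlarge $Q'$ to $\tilde Q'=Q(y_{Q'},2^{-\nu})$ (centered on $S$ and containing $Q'$), and set $P_{\pi_\nu}f:=P_{\tilde Q'}f$ on $Q'$, using the projector of Proposition \ref{prop:Projection}. Lemma \ref{lemmaSobolevPoincareIneguality} with $q=p$ on $\tilde Q'$ gives, with $1/\sigma=1/p+\alpha/s$,
\[
\kint_{\tilde Q'\cap S}|f-P_{\tilde Q'}f|^p\,dH^s\le c\,2^{-\nu\alpha p}\Big(\kint_{2\tilde Q'\cap S}(f^\sharp_{\alpha,S})^\sigma\,dH^s\Big)^{p/\sigma}.
\]
Bounding the right-hand average by $\big(M_\sigma(f^\sharp_{\alpha,S})(x)\big)^p$ for $x\in\tilde Q'\cap S$, integrating in $x$, and summing over the net cubes (which have bounded overlap) yields
\[
\int_S|f-P_{\pi_\nu}f|^p\,dH^s\le c\,2^{-\nu\alpha p}\int_S\big(M_\sigma(f^\sharp_{\alpha,S})\big)^p\,dH^s .
\]
Since $p/\sigma>1$, the $L^{p/\sigma}$-boundedness of the Hardy--Littlewood maximal operator on the space of homogeneous type $(S,H^s)$ bounds the last integral by $c\|f^\sharp_{\alpha,S}\|_p^p$. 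Taking $p$-th roots gives $\|f-P_{\pi_\nu}f\|_{L^p(S)}\le c\,2^{-\nu\alpha}\|f^\sharp_{\alpha,S}\|_p$, so the constant sequence $c_\nu\equiv c\|f^\sharp_{\alpha,S}\|_p$ certifies $f\in B_\alpha^{p,\infty}(S)$.

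\textbf{Main obstacle.} The delicate inclusion is the second one, and the crux is to sum the local $L^p$ estimates over the net without destroying the decay factor $2^{-\nu\alpha}$. Because the local bound involves the superlinear power $p/\sigma>1$ of an $L^\sigma$-average of $f^\sharp_{\alpha,S}$, a direct term-by-term summation does not close; one must instead recognize these averages as values of $M_\sigma(f^\sharp_{\alpha,S})$ and invoke the $L^{p/\sigma}$-boundedness of the maximal operator, which is exactly where the relation $1/\sigma=1/p+\alpha/s$ of Lemma \ref{lemmaSobolevPoincareIneguality} is used. The remaining technical points --- that net cubes need not be centered on $S$, handled by the enlargement $\tilde Q'$, and the bounded-overlap summation --- are routine once this structure is in place. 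The endpoint case $p=\infty$ requires only replacing sums by suprema and using the triviality of the maximal operator on $L^\infty$.
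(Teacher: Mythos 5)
Your argument is essentially the paper's, repackaged. The paper first proves a norm equivalence (Theorem \ref{thm:besov-norm}) expressing the $B^{p,q}_\alpha(S)$-norm through $\Vert\mathcal{E}_k(f,Q(\cdot,t))_{L^p(S)}\Vert_{L^p(S)}$, after which both inclusions of \eqref{Embedding} become short; you instead work directly with the net definition of $B^{p,q}_\alpha(S)$ and inline the two halves of that equivalence into the two inclusions. The ingredients are identical: shifted nets to pass between cubes centered on $S$ and net cubes, the projectors of Proposition \ref{prop:Projection} applied on enlarged cubes $Q(y,2t)$ with $y\in S$, and, for the second inclusion, Lemma \ref{lemmaSobolevPoincareIneguality} combined with the $L^{p/\sigma}$-boundedness of the maximal operator on $(S,H^s)$ (the paper packages this last step as Lemma \ref{lemma:sharpMaximal}). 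Note that in the paper's organization the second inclusion is the trivial one --- it is just the pointwise bound \eqref{eqRightEmbedding} once Theorem \ref{thm:besov-norm} is available; what you describe as the ``main obstacle'' is exactly the first half of the proof of Theorem \ref{thm:besov-norm}, and your treatment of it (enlargement to cubes centered on $S$, bounded overlap, $M_\sigma$) is sound.

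There is one genuine, though easily repaired, gap in the first inclusion. You bound $f^\sharp_{\alpha,S}(x)=\sup_{t>0}t^{-\alpha}\mathcal{E}_k(f,Q(x,t))_{L^1(S)}$ by choosing $\nu$ with $2^{-\nu}\approx t$, but the Besov definition only supplies nets of mesh $2^{-\nu}$ with $\nu\ge 0$, so your discrete maximal function $\sup_{\nu,j}g^{(j)}_\nu$ controls only the scales $t\lesssim 1$. The supremum over large $t$ must be handled separately, via $t^{-\alpha}\mathcal{E}_k(f,Q(x,t))_{L^1(S)}\le c\,M f(x)$ for $t\ge\gamma$ and the $L^p$-boundedness of $M$ for $p>1$; this is precisely the content of Lemma \ref{CharacterizationViaResrictedFunctions}, which the paper invokes at the very start of its proof and which you should cite as well. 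With that addition (and the routine verification that the net cube $Q'\supset Q(x,t)$, having a point of $S$ inside and side comparable to $t$, satisfies $H^s(Q'\cap S)\approx t^s$, so that the normalized best approximations over $Q(x,t)$ and $Q'$ are comparable via \eqref{eqMonotonyOfLocalApproximation}), your proof is complete.
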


\begin{remark}\label{RmEmbeddingsForIntegerAlpha}
If $\alpha>0$ is an integer then in the embeddings \eqref{Embedding} the space  $C_\alpha^p(S)$ shall be replaced with the space 
\[
\{f\in L^p(S): f^\flat_{\alpha,S}\in L^p(S)\},
\]
see Remark \ref{RmFlatMaximalFunction} for the difference between functions $f^\sharp_{\alpha,S}$ and $f^\flat_{\alpha,S}$. 
\end{remark}

Theorem~\ref{thm:besov} is an analogue of Theorem 7.1 in \cite{DeVoreSharpley84} for 
$S=\mathbb{R}^n$. Examples similar to the ones constructed in \cite{DeVoreSharpley84} show that the embeddings \eqref{Embedding} are the best possible within the scale of Besov spaces. 

The case of $s$-set with $s$ strictly less than $n$ 
is of our current interest due to the characterization for traces of potential spaces to $s$-sets given by A. Jonsson, see Theorem \ref{thm:TracesOfSobolevSpacesToSsets}.

\begin{corollary}\label{corollary}
Let $S$ be an $s$-set, $n-1<s<n$, $1<p<\infty$, $k\in \N$ and $\alpha=k-(n-s)/p>0$. Then for any $0<\varepsilon<(n-s)/p$ 
\begin{equation} 
C_{\alpha+\varepsilon}^p(S)\subset W^p_k(\mathbb{R}^n)|_S\subset C_\alpha^p(S). \end{equation}
\end{corollary}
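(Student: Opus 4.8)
The plan is to deduce the corollary entirely from Theorem \ref{thm:besov} together with Jonsson's trace characterization (Theorem \ref{thm:TracesOfSobolevSpacesToSsets}), with no new hard analysis. First I would record that, since $1<p<\infty$ and $k$ is a nonnegative integer, the potential space $L^p_k(\R^n)$ coincides with $W^{k,p}(\R^n)$, so Theorem \ref{thm:TracesOfSobolevSpacesToSsets} applied with $\beta=k$ gives
\[
W^{k,p}(\R^n)|_S = B^{p,p}_\alpha(S), \qquad \alpha = k - \tfrac{n-s}{p}.
\]
The hypothesis $n-1<s<n$ forces $0<\tfrac{n-s}{p}<1$, hence $k-1<\alpha<k$; in particular $\alpha$ is a non-integer with $[\alpha]=k-1$, so Theorem \ref{thm:besov} is applicable at the exponent $\alpha$. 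The right-hand inclusion is then immediate from the left half of that theorem:
\[
W^{k,p}(\R^n)|_S = B^{p,p}_\alpha(S) \subset C^p_\alpha(S).
\]

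For the left-hand inclusion I would first check that the constraint $0<\varepsilon<\tfrac{n-s}{p}$ keeps $\alpha+\varepsilon$ admissible: indeed $k-1<\alpha<\alpha+\varepsilon<k$, so $\alpha+\varepsilon$ is again a non-integer and $[\alpha+\varepsilon]=k-1=[\alpha]$. Thus the right half of Theorem \ref{thm:besov}, invoked at smoothness exponent $\alpha+\varepsilon$, yields $C^p_{\alpha+\varepsilon}(S)\subset B^{p,\infty}_{\alpha+\varepsilon}(S)$, and it remains to establish the elementary Besov embedding $B^{p,\infty}_{\alpha+\varepsilon}(S)\subset B^{p,p}_\alpha(S)$. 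Given $f\in B^{p,\infty}_{\alpha+\varepsilon}(S)$, there are a bounded sequence $\{c_\nu\}$ and, for each net $\pi$ of mesh $2^{-\nu}$, a polynomial family $P_\pi f\in P_{[\alpha+\varepsilon]}(\pi)=P_{[\alpha]}(\pi)$ with $\|f-P_\pi f\|_{L^p(S)}\le 2^{-\nu(\alpha+\varepsilon)}c_\nu$. Because the admissible degree is the same for $\alpha$ and $\alpha+\varepsilon$, the very same $P_\pi f$ is admissible for $B^{p,p}_\alpha(S)$, and setting $\tilde c_\nu=2^{-\nu\varepsilon}c_\nu$ gives $\|f-P_\pi f\|_{L^p(S)}\le 2^{-\nu\alpha}\tilde c_\nu$ with $\sum_\nu\tilde c_\nu^{\,p}\le(\sup_\mu c_\mu)^p\sum_\nu 2^{-\nu\varepsilon p}<\infty$. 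Hence $f\in B^{p,p}_\alpha(S)$. Chaining the inclusions,
\[
C^p_{\alpha+\varepsilon}(S)\subset B^{p,\infty}_{\alpha+\varepsilon}(S)\subset B^{p,p}_\alpha(S)=W^{k,p}(\R^n)|_S,
\]
which is the left-hand inclusion.

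The only content beyond bookkeeping is the Besov embedding, and the crux of the whole argument is the degree matching: the main obstacle is simply to verify carefully that $0<\varepsilon<\tfrac{n-s}{p}$ forces $k-1<\alpha+\varepsilon<k$, so that $[\alpha+\varepsilon]=[\alpha]$ and a single approximating polynomial family serves both Besov norms while Theorem \ref{thm:besov} remains valid at the shifted exponent. Everything else is a direct combination of the cited results.
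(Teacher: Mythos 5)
Your proposal is correct and is exactly the intended derivation: the paper states Corollary \ref{corollary} without a separate proof, as an immediate consequence of Theorem \ref{thm:besov} combined with Jonsson's trace characterization (Theorem \ref{thm:TracesOfSobolevSpacesToSsets}) and the elementary embedding $B^{p,\infty}_{\alpha+\varepsilon}(S)\subset B^{p,p}_{\alpha}(S)$. Your verification that $k-1<\alpha<\alpha+\varepsilon<k$, so that both exponents are non-integers with $[\alpha+\varepsilon]=[\alpha]=k-1$ and a single approximating family $P_\pi f$ serves both Besov norms, is precisely the bookkeeping the authors leave to the reader.
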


To prove Theorem~\ref{thm:besov}, 
we need the following representation for the norm of Besov spaces.

\begin{theorem}\label{thm:besov-norm}
Let $S$ be an $s$-set, $n-1<s\le n$, $\alpha>0$,
$1\le p,q\le\infty$ and $k=[\alpha]+1$. Then, when $q<\infty$, we have
\[
\Vert 
f\Vert_{B^{p,q}_\alpha(S)}\approx\Vert 
f\Vert_{L^p(S)}+\bigg(\int_0^1\bigg(\frac{\Vert\mathcal{E}_k(f,Q(\cdot,t))_{L^p(S)}\Vert_{L^p(S)}}{t^\alpha}\bigg)^q\frac{dt}{t}\bigg)^{1/q}
\]
and
\[
\Vert 
f\Vert_{B^{p,\infty}_\alpha(S)}\approx\Vert f\Vert_{L^p(S)}+\sup\limits_{0<t\le 
1}t^{-\alpha}\Vert\mathcal{E}_k(f,Q(\cdot,t))_{L^p(S)}\Vert_{L^p(S)}.
\]
\end{theorem}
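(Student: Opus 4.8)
The plan is to prove the two-sided estimate by passing back and forth between the net-based definition of $B^{p,q}_\alpha(S)$ and the continuous quantity $\phi(t):=\Vert\mathcal{E}_k(f,Q(\cdot,t))_{L^p(S)}\Vert_{L^p(S)}$, the bridge in both directions being the monotonicity \eqref{eqMonotonyOfLocalApproxSset} of local approximations together with a dyadic discretization of the integral in $t$. Throughout I will use the hypothesis $s>n-1$ in the following way: the intersection of $S$ with any hyperplane, and in particular with the boundary of a net cube, has Hausdorff dimension at most $n-1<s$ and is therefore $H^s$-null. Consequently a piecewise polynomial $P_\pi f\in P_{[\alpha]}(\pi)$ is well defined $H^s$-almost everywhere and $\int_S|f-P_\pi f|^p\,dH^s=\sum_{Q\in\pi}\int_{Q\cap S}|f-P_\pi f|^p\,dH^s$.

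For the estimate bounding the net norm by $\phi$, given a net $\pi$ of mesh $2^{-\nu}$ I would define $P_\pi f$ on each $Q\in\pi$ to be a near-best approximating polynomial of degree at most $k-1=[\alpha]$, for instance $P_Qf$ from Proposition~\ref{prop:Projection}, so that $\int_{Q\cap S}|f-P_\pi f|^p\,dH^s\le c\,H^s(Q\cap S)\,\mathcal{E}_k(f,Q)_{L^p(S)}^p$. Since each $Q\in\pi$ of side $2^{-\nu}$ is contained in $Q(x,2^{-\nu})$ for every $x\in Q$, the monotonicity \eqref{eqMonotonyOfLocalApproxSset} gives $\mathcal{E}_k(f,Q)_{L^p(S)}\le c\,\mathcal{E}_k(f,Q(x,2^{-\nu}))_{L^p(S)}$ for $x\in Q\cap S$; integrating in $x$ over $Q\cap S$, summing over $\pi$, and using the $H^s$-a.e.\ disjointness noted above yields $\Vert f-P_\pi f\Vert_{L^p(S)}\le c\,\phi(2^{-\nu})$. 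Thus $c_\nu:=c\,2^{\nu\alpha}\phi(2^{-\nu})$ is an admissible sequence, and a dyadic comparison of $\sum_\nu(2^{\nu\alpha}\phi(2^{-\nu}))^q$ with $\int_0^1(\phi(t)/t^\alpha)^q\,dt/t$, using $\phi(t)\le c\,\phi(2^{-\nu})$ on each interval $[2^{-\nu-1},2^{-\nu}]$ (again from \eqref{eqMonotonyOfLocalApproxSset}), bounds the net norm by the right-hand side; the $\nu=0$ contribution is absorbed into $\Vert f\Vert_{L^p(S)}$ through the Hardy--Littlewood maximal inequality.

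For the reverse estimate I would fix $\nu$ and exploit that a single admissible sequence $\{c_\nu\}$ serves simultaneously for every net of mesh $2^{-\nu}$. On a fixed net $\pi$, for $x$ in a cell $Q$ the restriction $P_\pi f|_Q$ is a single polynomial of degree at most $k-1$, whence $\mathcal{E}_k(f,Q)_{L^p(S)}^p\le\kint_{Q\cap S}|f-P_\pi f|^p\,dH^s$; multiplying by $H^s(Q\cap S)$ and summing gives $\sum_{Q\in\pi}H^s(Q\cap S)\,\mathcal{E}_k(f,Q)_{L^p(S)}^p\le\Vert f-P_\pi f\Vert_{L^p(S)}^p\le(2^{-\nu\alpha}c_\nu)^p$. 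The one genuine obstacle is that this controls approximations on net cells, whereas $\phi$ measures approximations on the centered cubes $Q(x,t)$, and a point $x$ may lie near the boundary of its cell. I would overcome this with a finite family of shifted nets $\pi^{(1)},\dots,\pi^{(m)}$ of mesh $2^{-\nu}$, chosen by a standard covering argument so that for every $x$ the cube $Q(x,c\,2^{-\nu})$ is contained in a single cell of at least one $\pi^{(i)}$. Then \eqref{eqMonotonyOfLocalApproxSset} yields $\mathcal{E}_k(f,Q(x,c\,2^{-\nu}))_{L^p(S)}\le c\max_i\mathcal{E}_k(f,Q^{(i)}_x)_{L^p(S)}$, where $Q^{(i)}_x$ is the cell of $\pi^{(i)}$ containing $x$; integrating over $S$ and applying the previous bound to each $\pi^{(i)}$ (all with the same $c_\nu$) gives $\phi(c\,2^{-\nu})\le c\,2^{-\nu\alpha}c_\nu$.

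Finally, after the harmless reindexing $c\,2^{-\nu}=2^{-\nu'}$ and a dyadic discretization of the integral in $t$, the last bound gives $\int_0^1(\phi(t)/t^\alpha)^q\,dt/t\le c\sum_\nu c_\nu^q$, with the finitely many coarsest scales absorbed into $\Vert f\Vert_{L^p(S)}$; combining this with the first estimate establishes the claimed equivalence. The case $q=\infty$ is proved identically, with suprema over $\nu$ and over $0<t\le1$ replacing the sums and the integral.
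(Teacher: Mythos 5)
Your overall architecture is the same as the paper's: discretize the $t$-integral dyadically, build $P_\pi f$ from near-best local polynomials to bound the Besov norm by the $\mathcal{E}_k$-quantity, and use the defining property of the Besov norm on finitely many auxiliary nets of the same mesh for the converse. The converse direction is sound: your shifted-nets covering plays the role of the paper's decomposition of the doubled cubes $\{2Q\}$ into $2^n$ subfamilies of nets of mesh $4t$, and in that direction the normalized quantity for a non-centered cell $Q$ only ever appears in the harmless combination $H^s(Q\cap S)\,\mathcal{E}_k(f,Q)_{L^p(S)}^p=E_k(f,Q)_{L^p(S)}^p$, while the centered cube $Q(x,t)$, to which $s$-regularity applies, sits inside the cell.

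The first direction, however, has a genuine gap. A net cell $Q$ of side $2^{-\nu}$ need not be centered at a point of $S$, and $H^s(Q\cap S)$ can be arbitrarily small compared with $2^{-\nu s}$ when $Q$ barely grazes $S$. Two of your steps silently assume otherwise. First, Proposition~\ref{prop:Projection} is stated only for cubes centered at $S$ (not fatal: a near-minimizer for $E_k(f,Q)_{L^p(S)}$ exists for any cell). Second, and more seriously, the claimed bound $\mathcal{E}_k(f,Q)_{L^p(S)}\le c\,\mathcal{E}_k(f,Q(x,2^{-\nu}))_{L^p(S)}$ for $x\in Q\cap S$ is false with a uniform constant: \eqref{eqMonotonyOfLocalApproximation} only yields the factor $\bigl(H^s(Q(x,2^{-\nu})\cap S)/H^s(Q\cap S)\bigr)^{1/p}$, and the passage to \eqref{eqMonotonyOfLocalApproxSset} requires both cubes to be centered at $S$; the factor blows up as $H^s(Q\cap S)\to 0$. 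What you do control is $E_k(f,Q)_{L^p(S)}^p\le c\,2^{-\nu s}\,\mathcal{E}_k(f,Q(x,2^{-\nu}))_{L^p(S)}^p$ for each $x\in Q\cap S$, but converting $2^{-\nu s}$ times an infimum into an integral requires integrating over a set of measure $\gtrsim 2^{-\nu s}$, which $Q\cap S$ need not be. The repair is exactly the paper's device: replace each cell $Q$ by the enlarged cube $K=Q(y,2t)$ centered at a chosen $y\in Q\cap S$, so that $H^s(K\cap S)\approx 2^{-\nu s}$ by $s$-regularity, set $P_\pi f|_Q=P_Kf$, integrate the resulting estimate over $K\cap S$, and invoke the bounded overlap $\sum_{Q\in\pi'}\chi_{K}\le c$. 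With that modification your argument goes through. (A minor further quibble: the $\nu=0$ term is absorbed into $\Vert f\Vert_{L^p(S)}$ by Fubini and $s$-regularity, not by the maximal inequality, since the relevant exponent there is $p/p=1$.)
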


\begin{remark}
Such characterization of Besov spaces is fairly standard, see for example \cite{Brudnyi}, \cite{Triebel} for the case when $S=\R^n$ and \cite{JonssonWallin1984}, \cite{Shvartsman} for the case of $n$-sets.
\end{remark}

\begin{proof}
First, suppose that the right-hand-side is finite. We note that by~\eqref{eqMonotonyOfLocalApproximation}, we 
can replace the integral by the sum 
\begin{equation}\label{eqn:sum}
\sum\limits_{\nu=0}^\infty 2^{\nu 
\alpha 
q}\bigg(\int_S\mathcal{E}^p_k(f,Q(x,2^{-\nu}))_{L^p(S)}\;dH^s(x)\bigg)^{q/p}.
\end{equation}

Take a net $\pi=\{Q_i,i=1,2,\dots\}$ with mesh size $2^{-\nu}$ and let 
$P_\pi f$ be a function from $P_k(\pi)$ which will be chosen later. 
Clearly,
\begin{equation*} 
\int_{S}|f-P_\pi f|^p\;dH^s=\sum\limits_{Q\in\pi}\int_{Q\cap 
S}|f-P_\pi f|^p\;dH^s= \sum\limits_{Q\in\pi'}\int_{Q\cap 
S}|f-P_\pi f|^p\;dH^s, 
\end{equation*} 
where $\pi'=\{Q\in\pi:\;Q\cap S\neq\emptyset\}$.

Set $t= 2^{-\nu-1}$. For any cube $Q=Q(x,t)$ from $\pi'$, choose a point $y\in Q\cap S$ 
and set $K=Q(y,2t)$. Then $Q\subset K$ and
\[
\sum\limits_{Q\in\pi'}\chi_{K}\le c,
\]
where constant $c$ depends only on $n$.

The center of every cube $K$ is in $S$. Hence, by Proposition~\ref{prop:Projection}, there
is a projector $P_{K}:L^1(K\cap S)\to P_{[\alpha]}$ such that
\[
\int_{K\cap S}|f-P_Kf|^p\;dH^s\le c H^s(K\cap S)\mathcal{E}^p_{k}(f,K)_{L^p(S)},
\]
with constant $c$ independent of $f$ and $K$.
Define $P_\pi f(x)=P_{K} f(x)$,
$x\in Q$, and $P_\pi f(x)=0$ if
$x\notin\bigcup\limits_{Q\in\pi'}Q$.
For any point $z\in K\cap S$ we have $K\subset Q(z,4t)$ and
\begin{equation*}
\begin{split} 
\int_{Q\cap S}|f-P_\pi f|^p\;dH^s&=\int_{Q\cap S}|f-P_Kf|^p\;dH^s
\\
&\le \int_{K\cap S}|f-P_Kf|^p\;dH^s \\
&\le c H^s(K\cap S)\mathcal{E}^p_{k}(f,K)_{L^p(S)}
\\
&\le c H^s(K\cap S)\mathcal{E}^p_{k}(f,Q(z,4t))_{L^p(S)}, 
\end{split}
\end{equation*} 
where the last inequality holds by 
\eqref{eqMonotonyOfLocalApproximation}. Then we integrate the inequality over 
the set $K\cap S$ to obtain
\[
 \int_{Q\cap S}|f-P_\pi f|^p\;dH^s\le c 
\int_{K\cap S}\mathcal{E}^p_{k}(f,Q(z,4t))_{L^p(S)}\;dH^s(z). 
\]
Remember that we set $t=2^{-\nu-1}$. Thus we have
\begin{equation*} 
\begin{split}
\left(\int_{S}|f-P_\pi f|^p\;dH^s\right)^{1/p}
\le&\, c\left(\sum_{Q\in\pi'}\int_{K\cap 
S}\mathcal{E}^p_{k}(f,Q(z,2^{-\nu+1}))_{L^p(S)}\;dH^s(z)\right)^{1/p}\\
\le&\, c\left(\int_{S}\mathcal{E}^p_{k}(f,Q(\cdot,2^{-\nu+1}))_{L^p(S)}\;dH^s\right)^{1/p}. 
\end{split}
\end{equation*}
Let now $c_\nu$ be equal to the last integral multiplied 
by $2^{\nu \alpha }$. Then
\[ 
\sum\limits_{\nu=1}^{\infty}c_\nu^q= 
c\sum\limits_{\nu=1}^{\infty}2^{\nu \alpha 
q}\bigg(\int_S\mathcal{E}^p_k(f,Q(x,2^{-\nu}))_{L^p(S)}\;dH^s(x)\bigg)^{q/p}<\infty, 
\]
so that, by \eqref{eqn:sum}, $f\in B^{p,q}_\alpha(S)$ and the wanted estimate for its 
norm holds.

Suppose now that $f\in B^{p,q}_\alpha(S)$ and $\pi$ is a net with 
mesh size $2t$, $t>0$. Denote by $\pi'$ a family of all cubes $Q$ from 
$\pi$ such that $Q\cap S\neq\emptyset$. If $Q\in\pi'$ and $x\in Q\cap 
S$, then $Q(x,t)\subset 2Q$, $H^s(Q(x,t)\cap S)\approx H^s(2Q\cap S)$ 
and by \eqref{eqMonotonyOfLocalApproximation} 
\[ 
\mathcal{E}^p_k(f,Q(x,t))_{L^u(S)}\le c\,\mathcal{E}^p_k(f,2Q)_{L^u(S)}. 
\] 
Hence, 
\[ 
\begin{split}
\int_S\mathcal{E}^p_k(f,Q(x,t))_{L^u(S)}\;dH^s(x)
=&\sum\limits_{Q\in\pi'} 
\int_{Q\cap S}\mathcal{E}^p_k(f,Q(x,t))_{L^u(S)}\;dH^s(x)\\
\le\,&\, c\sum\limits_{Q\in\pi'}H^s(2Q\cap S)\mathcal{E}^p_k(f,2Q)_{L^u(S)}.
\end{split}
\] 
It is easy to see that family of cubes $\tilde{\pi}=\{2Q:\;Q\in\pi'\}$ 
can be represented as $\tilde{\pi}=\cup_{i=1}^{m}\pi_i$, where $m=2^n$ and every $\pi_i$ is a subfamily of a net 
with mesh size $4t$.

Set $k=[\alpha]+1$ and $t=2^{-\nu}$, $\nu=2,\dots$. Since $f\in 
B^{p,q}_{\alpha}(S)$, there are functions $P_{\pi_i} f\in P_{k-1}$, 
$i=1,\dots,m$, such that
\[
\begin{split}
\int_S\mathcal{E}^p_k(&f,Q(x,2^{-\nu}))_{L^u(S)}\;dH^s(x)
\\
&\le c\sum\limits_{i=1}^{m}\sum\limits_{Q\in\pi_i}H^s(Q\cap 
S)\mathcal{E}^p_k(f,Q)_{L^p(S)}
\\
&\le c\sup\limits_{\pi_i}\sum\limits_{Q\in\pi_i}(H^s(Q\cap 
S))^{1-p/p}\int_{Q\cap 
S}|f-P_{\pi_i} f|^u\;dH^s
\\
&= c \sup\limits_{\pi_i}\sum\limits_{Q\in\pi_i}\int_{Q\cap 
S}|f-P_{\pi_i} f|^p\;dH^s,\\
&\le c\sup\limits_{\pi_i}\int_{ 
S}|f-P_{\pi_i} f|^p\;dH^s
\le c 2^{(-\nu+2)\alpha p}c^p_{\nu-2}
\end{split}
\]
 and, consequently
 \[
\sum\limits_{\nu=2}^\infty2^{\nu \alpha 
q}\bigg(\int_S\mathcal{E}^p_k(f,Q(x,2^{-\nu}))_{L^u(S)}\;dH^s(x)\bigg)^{q/p}\le 
c\sum\limits_{\nu=0}^\infty c_{\nu}^q<\infty.\qedhere
\]
\end{proof}

\begin{proof}[Proof of Theorem~\ref{thm:besov}]
We start with the first embedding and use here the characterization of the
spaces $C^p_\alpha(S)$ given by Lemma
\ref{CharacterizationViaResrictedFunctions}.
By property \eqref{eqMonotonyOfLocalApproximation} of local best 
approximation, we have
\[
\begin{split}
\sup\limits_{0<t\le 
\frac{1}{2}}\frac{1}{t^{\alpha 
p}}\mathcal{E}^p_k(f,Q(x,t))_{L^p(S)}&\le c\sum\limits_{\nu=1}^\infty 
2^{-\nu\alpha p}\mathcal{E}^p_k(f,Q(x,2^{-\nu}))_{L^p(S)}
\\
&\le 
c\int_0^1\frac{\mathcal{E}^p_k(f,Q(x,t))_{L^p(S)}}{t^{\alpha 
p}}\;\frac{dt}{t}.
\end{split}
\]
Thus,
\[
\begin{split}
 \Vert f^\sharp_{\alpha,S}\Vert^p_{L^p(S)}\le &\,
c\int_S\int_0^1\frac{\mathcal{E}^p_k(f,Q(x,t))_{L^p(S)}}{t^{\alpha 
p}}\;\frac{dt}{t}dx
\\
=&\,c\int_0^1\bigg(\frac{\Vert\mathcal{E}_k(f,Q(\cdot,t))_{L^p(S)}\Vert_{L^p(S)}}{t^\alpha}\bigg)^p\frac{dt}{t}. 
\end{split}
\]

For non-integer $\alpha>0$ the number $k=-[-\alpha]$ is strictly greater than $\alpha$, hence, by Theorem \ref{thm:besov-norm} the last term can be estimated by
$\|f\|_{B_\alpha^{p,p}(S)}$.

To prove the second embedding, we notice that for every $\alpha>0$, $k\in\N$ and $t>0$, we have
\begin{equation}\label{eqRightEmbedding}
\frac{\Vert\mathcal{E}_k(f,Q(\cdot,t))_{L^p(S)}\Vert_{L^p(S)}}{t^\alpha}
\le \Vert\sup\limits_{t>0}
\frac{\mathcal{E}_k(f,Q(\cdot,t))_{L^p(S)}}{t^\alpha}\Vert_{L^p(S)}.
\end{equation}
Setting $k-[-\alpha]$ and taking the supremum over the interval $(0,1]$ in \eqref{eqRightEmbedding} we get 
\[
\|f\|_{B_\alpha^{p,\infty}(S)}\le \Vert f^\sharp_{\alpha,S}\Vert_{L^p(S)}.\qedhere
\]
\end{proof}

Since the statements of Lemmata \ref{lemmaSobolevPoincareIneguality}, \ref{lemma:sharpMaximal}, 
\ref{CharacterizationViaResrictedFunctions} hold true for the sharp maximal functions $f^\flat_{\alpha,S}$ as well, 
the case of integer $\alpha$ can be treated with the slight modification of the last proof; see Remark \ref{RmEmbeddingsForIntegerAlpha}.

\section{Sobolev spaces on $s$-sets}\label{sect:Sobolev}

As mentioned above, the definition \eqref{eqGeneralizedSobolevClasses} of the function space $C^p_k(S)$ yields the Sobolev space $W^{k,p}(\R^n)$  
if $S=\R^n$ \cite{Calderon1972} or the space $W^{k,p}(S)$  if $S$ is a $W^{k,p}$-extension domain \cite{HajlaszKoskelaTuominen}. 
Motivated by these facts, one could ask a natural question: Can the spaces $C^p_k(S)$ be relevant analogs of
classical Sobolev spaces in some more general settings? 

If $S$ is an $n$-set, then $C^p_k(S)$ is the trace space of $W^{k,p}(\R^n)$ to $S$ \cite{Shvartsman} and therefore functions from $C^p_k(S)$ 
possess certain distinctive properties of Sobolev spaces. In the case of $s$-sets with $n-1<s<n$, we can not derive the corresponding properties 
from the trace reasoning. Nevertheless, some results which are known for Sobolev spaces of higher order can be obtained. 
In particular, we have a version of Sobolev-Poncar\'e  inequality given by Lemma \ref{lemmaSobolevPoincareIneguality},
and an analogue of Sobolev embedding theorem which is proved below.
\begin{proposition}
Let $S$ be an $s$-set, $n-1<s<n$,  $p\geq 1$, $k p<s$ and $q=sp/(s-kp)$. Then 
\begin{equation}\label{SobolevEmbedding}
\Vert f\Vert_{L^q(S)}\le c(\Vert f^\sharp_{\alpha,S}\Vert_{L^p(S)}+(\diam S)^{-\alpha}\Vert f\Vert_{L^p(S)})
\end{equation} 
\end{proposition}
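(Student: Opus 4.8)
The plan is to deduce the Sobolev-type embedding \eqref{SobolevEmbedding} from the Sobolev--Poincar\'e inequality of Lemma \ref{lemmaSobolevPoincareIneguality}, following the standard route of first establishing a pointwise estimate for $f$ in terms of a fractional integral of its sharp maximal function and then invoking the boundedness of this operator between the appropriate Lebesgue spaces. Throughout, write $\alpha=k$ (the integer associated to the space $C^p_k(S)$), since the statement concerns the sharp maximal function $f^\sharp_{\alpha,S}$ with this $\alpha$.

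First I would fix $x_0\in S$ a Lebesgue point of $f$ and revisit the telescoping argument used inside the proof of Lemma \ref{lemmaSobolevPoincareIneguality}. There the summation
\[
f(x_0)-P_{Q(x_0,r)}f(x_0)=\sum_{k=0}^{\infty}\bigl(P_{Q_{k+1}}f(x_0)-P_{Q_k}f(x_0)\bigr)
\]
with $Q_k=Q(x_0,2^{-k}r)$ was controlled by the sharp maximal function. Taking $r=\diam S$ and using the crude bound $|P_{Q(x_0,r)}f(x_0)|\le c\,(\diam S)^{-s/p}\|f\|_{L^p(S)}$ coming from property \eqref{item2} of Lemma \ref{lemma:ProjectProperties} and the $s$-regularity of $S$, I expect to obtain a pointwise inequality of the form
\[
|f(x_0)|\le c\int_{0}^{\diam S} t^{\alpha}\Bigl(\kint_{Q_S(x_0,t)}|f-P_{Q(x_0,t)}f|\,dH^s\Bigr)\frac{dt}{t}+c(\diam S)^{-s/p}\|f\|_{L^p(S)},
\]
and hence, bounding the inner average by $t^{\alpha}f^\sharp_{\alpha,S}(z)$ for $z$ ranging over $Q_S(x_0,t)$ exactly as in \eqref{eqEstimateOfDifferenceOfTwoMeansWithAverage}, a Riesz-potential majorant
\[
|f(x_0)|\le c\int_S \frac{f^\sharp_{\alpha,S}(z)}{\dist(x_0,z)^{s-\alpha}}\,dH^s(z)+c(\diam S)^{-\alpha}\|f\|_{L^p(S)},
\]
valid $H^s$-a.e. on $S$.

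Next I would apply the boundedness of the fractional integral operator $I_\alpha g(x)=\int_S |x-z|^{\alpha-s}g(z)\,dH^s(z)$ on the $s$-regular space $(S,H^s)$: under the condition $\alpha p<s$ it maps $L^p(S)$ into $L^q(S)$ with $q=sp/(s-\alpha p)$, which is precisely the target exponent in the statement. This is the classical Hardy--Littlewood--Sobolev estimate in the metric-measure setting, available because $H^s$ is Ahlfors $s$-regular. Applying it to $g=f^\sharp_{\alpha,S}$ and combining with the second term (which is already controlled in $L^p$, and hence in $L^q$ after absorbing the diameter factor) yields \eqref{SobolevEmbedding}.

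The main obstacle I anticipate is the rigorous passage from the radial integral $\int_0^{\diam S} t^{\alpha-1}u(x_0,t)\,dt$ to the genuine Riesz potential $I_\alpha(f^\sharp_{\alpha,S})(x_0)$, i.e. converting the one-variable average estimate into a bona fide convolution over $S$. This requires covering each annulus $Q_S(x_0,t)\setminus Q_S(x_0,t/2)$ efficiently and summing the contributions, and it is exactly here that $s$-regularity (giving $H^s(Q_S(x_0,t))\approx t^s$) is indispensable; the condition $\alpha p<s$ guarantees the resulting kernel is locally integrable and the operator well defined. Care must also be taken that the crude boundary term involving $\diam S$ is finite, which again follows from $s$-regularity together with $f\in L^p(S)$.
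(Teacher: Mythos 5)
Your route is genuinely different from the paper's and, for $p>1$, it is viable: the telescoping sum over $Q_j=Q(x_0,2^{-j}\diam S)$ together with a Fubini argument over dyadic annuli does yield the pointwise majorant
$|f(x_0)|\le c\,I_\alpha(f^\sharp_{\alpha,S})(x_0)+c(\diam S)^{-s/p}\Vert f\Vert_{L^p(S)}$
at $H^s$-a.e.\ point (using \eqref{CovergenceOfP_QatPoint} and item \eqref{item4} of Lemma \ref{lemma:ProjectProperties}), and the Hardy--Littlewood--Sobolev inequality on the Ahlfors $s$-regular space $(S,H^s)$ then maps $L^p$ to $L^q$ with $q=sp/(s-\alpha p)$. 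The paper instead proves the proposition in one step: it applies Lemma \ref{lemmaSobolevPoincareIneguality} with the exponent $\sigma$ there chosen equal to $p$ (so that $1/p=1/q+\alpha/s$ gives exactly the target $q$) on the single cube $Q=Q(x,\diam S)$, bounds the polynomial term by $c\kint_{Q_S}|f|\,dH^s$ via item \eqref{item2} of Lemma \ref{lemma:ProjectProperties}, and unwinds the normalized averages using $H^s(S)\approx(\diam S)^s$. All the analytic work you propose to redo is already packaged in Lemma \ref{lemmaSobolevPoincareIneguality}, so your argument is longer but not wrong in spirit.

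There is, however, a genuine gap at the endpoint $p=1$, which the statement allows. The strong-type fractional integration theorem $I_\alpha:L^p(S)\to L^{sp/(s-\alpha p)}(S)$ fails for $p=1$ (one only gets the weak-type $(1,s/(s-\alpha))$ estimate), so your final step does not close there. The paper's proof survives at $p=1$ precisely because Lemma \ref{lemmaSobolevPoincareIneguality} is established by Hedberg's trick: the pointwise bound \eqref{eqRateOfSteklovMeansApproximationWithS} interpolates between $f^\sharp_{\alpha,S}(x_0)$ itself and an $L^\sigma$-average of $f^\sharp_{\alpha,S}$ rather than passing through the full Riesz potential, and this yields the strong $(L^q,L^\sigma)$ Poincar\'e inequality even when $\sigma=1$. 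To repair your argument at $p=1$ you would need to insert the same case distinction ($f^\sharp_{\alpha,S}(x_0)\le I$ versus $f^\sharp_{\alpha,S}(x_0)>I$) before integrating, at which point you have essentially reproduced the proof of Lemma \ref{lemmaSobolevPoincareIneguality}. A minor typographical point: your displayed radial integral carries an extra factor $t^\alpha$; it should read $\int_0^{\diam S}t^{\alpha}u(x_0,t)\,\frac{dt}{t}$ with $u$ as in the proof of that lemma, since the average of $|f-P_{Q(x_0,t)}f|$ already contributes the factor $t^\alpha f^\sharp_{\alpha,S}$.
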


\begin{proof}
By Lemma \ref{lemmaSobolevPoincareIneguality} and by statement \eqref{item2} from Lemma \ref{lemma:ProjectProperties}
\[
\begin{split}
\bigg(\kint_{Q_S}|f|^q\,dH^s\bigg)^{1/q}\le &\bigg(\kint_{Q_S}|f-P_Qf|^q\,dH^s\bigg)^{1/q}+\bigg(\kint_{Q_S}|P_Qf|^q\,dH^s\bigg)^{1/q}
\\
\le &\,c\bigg[r^\alpha \bigg(\kint_{2Q_S}(f^\sharp_{\alpha,S})^p\,dH^s\bigg)^{1/p}+\kint_{Q_S}|f|\,dH^s\bigg].
\end{split}
\]

Choosing $Q=Q(x,\diam S)$, where $x$ is any point in $S$, and using the $s$-regularity of $S$, we get \eqref{SobolevEmbedding}.
\end{proof}

For the first order Sobolev space, the definition in terms of $L^p$-properties of sharp maximal functions makes sense also 
in general situation of a metric measure space \cite{HajlaszKinnunen}. So far, very little is known about the higher order case. 
The problem is that in the definition of the sharp maximal function, we need a family of polynomials with special properties. 
In case of $s$-sets in $\R^n$, $n-1<s\le n$,
such families do exist, see Section~\ref{SecSharpMaximalFunctions}. 
In more general situation, one could use the related technique assuming that some  polynomial type functions exist. 
This kind of an approach is used for example in \cite{LiuLuWheeden},
where a version of polynomials on metric spaces equipped with a doubling measure have been proposed.
See also \cite{AlabernMateuVerdera}, where a characterization of higher order Sobolev spaces via a quadratic multiscale expression is used to
propose an other definition for Sobolev functions on any metric space.


\bigskip
\noindent Addresses:

\smallskip
\noindent L.I.: Department of Mathematics, P.O. Box 11100, 
FI-00076 Aalto University, Finland. \\
\noindent 
E-mail: {\tt lizaveta.ihnatsyeva@aalto.fi}

\noindent R.K.: Department of Mathematics and Statistics,
P.O. Box 68, FI-00014 University of Helsinki, Finland. \\
\noindent 
E-mail: {\tt riikka.korte@helsinki.fi}\\

\end{document}